\documentclass[11pt,reqno]{amsart}
\usepackage[T1]{fontenc}
\usepackage[utf8]{inputenc}
\usepackage{lmodern}
\usepackage[a4paper,left=3cm,right=3cm,top=2.8cm,bottom=2.8cm]{geometry}
\usepackage{mathtools,amssymb,mathrsfs,enumerate}
\usepackage{cite}
\usepackage[hidelinks,unicode]{hyperref}
\hypersetup{
  pdftitle={Observability from measurable sets for strongly coupled parabolic systems},
  pdfauthor={Xinyi Chen and Kévin Le Balc'h}
}
\allowdisplaybreaks
\numberwithin{equation}{section}

\newtheoremstyle{siamplain}{6pt}{6pt}{\itshape}{\parindent}{\scshape}{.}{.5em}{}
\newtheoremstyle{siamremark}{6pt}{6pt}{\normalfont}{\parindent}{\itshape}{.}{.5em}{}
\theoremstyle{siamplain}
\newtheorem{theorem}{Theorem}[section]
\newtheorem{lemma}[theorem]{Lemma}
\newtheorem{corollary}[theorem]{Corollary}
\newtheorem{proposition}[theorem]{Proposition}

\theoremstyle{siamremark}
\newtheorem{remark}[theorem]{Remark}
\newtheorem{example}[theorem]{Example}

\newcommand{\R}{\mathbb R}

\title[Observability for coupled parabolic systems]
{Observability from measurable sets for strongly coupled parabolic systems}
\author{Xinyi Chen}
\address{School of Mathematics (Zhuhai), Sun Yat-sen University,
Zhuhai 519082, Guangdong, China}
\email{chenxy2288@mail2.sysu.edu.cn}
\author{Kévin Le Balc'h}
\address{Inria Paris, Sorbonne Université, Laboratoire Jacques-Louis Lions,
4 place Jussieu, 75005 Paris}
\email{kevin.le-balc-h@inria.fr}
\date{}
\subjclass[2020]{35K40, 93B07, 93C20}
\keywords{Strongly coupled parabolic systems, observability from measurable sets,
Kalman rank condition, null-controllability}

\begin{document}
\begin{abstract}
We characterize observability of strongly coupled parabolic systems from arbitrary space-time measurable sets of positive measure. For constant coercive diffusion matrices and arbitrary constant zeroth-order coupling and observation matrices, observability holds if and only if the generalized Kalman rank condition is satisfied at every Dirichlet eigenvalue. The diffusion matrix need not be diagonalizable, and the number of components is arbitrary. The observation is measured in $L^1$, yielding null controllability with bounded controls by duality. The proof combines a Remez-type inequality for exponential polynomials with a quantitative analysis of the mode-dependent Kalman condition. The resulting finite-dimensional observability estimate is uniform over the spatial modes up to a polynomial frequency loss. A spectral inequality and the Lebeau--Robbiano strategy then give observability of the full system.
\end{abstract}
\maketitle

\section{Introduction and main results}\label{sec1}

\subsection{Background and motivation}

Let $\Omega\subset\mathbb{R}^d$ be a bounded Lipschitz,
locally star-shaped domain, and let $T>0$.
We consider the following coupled parabolic system
\begin{equation}
\label{eq:system}
\begin{cases}
\partial_t z-A^\top\Delta z= M^\top z
& \text{in }(0,T)\times\Omega,\\
z=0
& \text{on }(0,T)\times\partial\Omega,\\
z(0)=z_0
& \text{in }\Omega.
\end{cases}
\end{equation}
Here, $A,M\in\mathbb{R}^{m\times m}$, where $A$ satisfies
\begin{equation}
\label{ass:H1}
\langle AX, X \rangle \ge \alpha |X|^2, \quad \forall X \in \mathbb{R}^m,
\end{equation}
for some $\alpha>0$.
This assumption ensures that the system is
parabolic in the sense of Petrovskii and, in particular,
that \eqref{eq:system} is well-posed. More precisely,
for every $z_0\in L^2(\Omega;\mathbb{R}^m)$, there exists
a unique solution
$
z\in C([0,T];L^2(\Omega;\mathbb{R}^m)).
$
The system is referred to as strongly coupled since
the coupling occurs through the principal part of the
differential operator.

Let $B\in\mathbb{R}^{m\times r}$ be an observation matrix,
and let $\mathcal{D}\subset(0,T)\times\Omega$ be a
measurable set of positive measure. Our main objective
is to characterize the triples $(A,M,B)$ for which there
exists a constant $C>0$ such that
\begin{equation}
\label{eq:observabilityintro}
\|z(T)\|_{L^2(\Omega;\mathbb{R}^m)}^2
\leq C\int_{\mathcal{D}}|B^\top z(t,x)|^2\,dx\,dt,
\end{equation}
for every initial datum
$z_0\in L^2(\Omega;\mathbb{R}^m)$.
This inequality expresses that the energy of the full
system at time $T$ can be estimated from partial
observations on an arbitrary space-time measurable set
of positive measure.

A natural starting point is the finite-dimensional
counterpart
\begin{equation}
\label{eq:ODEintro}
z'(t)=M^\top z(t),
\qquad z(0)=z_0\in\mathbb{R}^m.
\end{equation}
Given any measurable set $E\subset(0,T)$ of positive
measure, the corresponding observability inequality is
\begin{equation}
\label{eq:observabilityintrosystem}
|z(T)|^2
\leq C\int_E|B^\top z(t)|^2\,dt,
\qquad \forall z_0\in\mathbb{R}^m.
\end{equation}
It is a classical consequence of the Kalman observability
criterion (see \cite[Chapter 1]{Coron2007}) that this inequality holds if and only if
\begin{equation}
\operatorname{rank}
\begin{bmatrix}
B & MB & \cdots & M^{m-1}B
\end{bmatrix}=m.
\end{equation}

For the parabolic system \eqref{eq:system}, this
finite-dimensional picture appears after projecting onto
the Dirichlet eigenmodes of $-\Delta$. More precisely, if
$(\lambda_k,e_k)$ is a Dirichlet eigenpair and
\[
z_k(t):=\int_\Omega z(t,x)e_k(x)\,dx,
\]
then
\begin{equation}
\label{eq:ODEmodeintro}
\begin{cases}
z_k'(t)=(-\lambda_k A^\top+M^\top)z_k(t),\\
y_k(t)=B^\top z_k(t),
\end{cases}
\end{equation}
is the corresponding finite-dimensional system together
with its observed output.
The Kalman criterion applied to this system leads directly
to the following frequency-dependent rank condition:
\begin{equation}\label{ass:H2}
\operatorname{rank}
\bigl[
B\quad (-\lambda_k A+M)B \quad
\dots \quad
(-\lambda_k A+M)^{m-1}B
\bigr]=m,
\qquad \forall k\geq 1.
\end{equation}
The purpose of this paper is to show that this condition
characterizes observability of the full parabolic system,
even when observations are restricted to an arbitrary
measurable subset of space-time.

The observability of coupled parabolic systems has been extensively studied over the past decades. We briefly review some key results, with particular emphasis on observability from measurable sets, to highlight the main motivation and novelty of the present work.

\medskip
\noindent\textbf{Scalar case.}
For the scalar heat equation, corresponding to $m=1$,
the coupling issue disappears. We may assume that
$A=a>0$ and $B=b\neq 0$.
The observability inequality
\eqref{eq:observabilityintro} was first established
for cylindrical observation sets
$\mathcal{D}=(0,T) \times \omega$, with
$\omega\subset\Omega$ a nonempty open subset, in the seminal
works \cite{LebeauRobbiano1995,FursikovImanuvilov1996}.
Several intermediate results retained a product structure in space and time.
Wang \cite{Wang2008} obtained bounded null controls supported on
$E\times\omega$, where $E\subset(0,T)$ has positive measure and
$\omega\subset\Omega$ is nonempty and open. Phung and Wang
\cite{PhungWang} established related observability estimates for parabolic
equations from such sets, using quantitative unique continuation at a
single time. Under analyticity assumptions, Apraiz and Escauriaza
\cite{ApraizEscauriaza2013} treated measurable spatial observation sets,
and Zhang \cite{Zhang2012} proved observability for the heat equation on
products $E\times\omega$ with both factors measurable and of positive
measure, for domains with real-analytic boundary.
The product structure was subsequently removed by Apraiz, Escauriaza,
Wang, and Zhang \cite{AEWZ}, who established observability from arbitrary
space-time measurable sets of positive measure.

A further direction concerns spatial sets of zero Lebesgue measure.
Burq and Moyano \cite{BurqMoyano2023} established spectral inequalities
and heat observability from spatial sets of positive $(d-\delta)$-dimensional
Hausdorff measure, for sufficiently small $\delta>0$, allowing elliptic
operators with Lipschitz coefficients.
Green, Le Balc'h, Martin, and Orsoni \cite{GreenLeBalchMartinOrsoni2025}
proved observability of the heat equation on bounded $C^1$ domains from
spatial sets of Hausdorff dimension strictly greater than $d-1$, with the
observation measured by a spatial supremum integrated in time.
Huang, G.~Wang, and M.~Wang \cite{HuangWangWang2026} subsequently refined
this geometric criterion using logarithmic-type Hausdorff contents.
Their observability estimates also cover certain observation sets of
Hausdorff dimension exactly $d-1$.

\medskip
\noindent\textbf{Weakly coupled case.}
A prototypical example of a weakly coupled parabolic
system is when $A= I_m$ and $M \in \R^{m \times m}$. Here, the coupling only occurs through the zeroth-order
term and does not affect the principal part of the
differential operator.

For cylindrical observation sets
$\mathcal{D}=(0,T)\times\omega$, with $\omega$ nonempty
and open, Ammar Khodja et al.
\cite{AmmarKhodjaEtAl2009Localized} established
observability under the classical Kalman rank condition
\begin{equation*}
\operatorname{rank}
\begin{bmatrix}
B & MB & \cdots & M^{m-1}B
\end{bmatrix}=m.
\end{equation*}
We refer to \cite{AmmarKhodjaEtAl2011Survey} for a survey
of controllability results for coupled parabolic systems
and of the role played by Kalman-type conditions.
Escauriaza, Montaner, and Zhang
\cite{EscauriazaMontanerZhang2015} subsequently
obtained observability from arbitrary space-time
measurable sets of positive measure for a class
of weakly coupled systems with two components.

First-order couplings have also been studied for controls supported on
$(0,T)\times\omega$, with $\omega$ nonempty and open. Duprez and Lissy
\cite{DuprezLissy2016} characterized null and approximate controllability
for a class of constant-coefficient systems with $m$ equations and $m-1$
controls, allowing both zeroth- and first-order couplings. Their approach
combines the fictitious control method, algebraic solvability, and Carleman
estimates. For two equations with variable coefficients, they obtained
further sufficient conditions and counterexamples in
\cite{DuprezLissy2018}. Extending our measurable-set result to first-order
couplings would require additional arguments: spatial derivatives do not,
in general, preserve the Dirichlet eigenspaces, so the independent modal
systems used here are no longer available.

\medskip
\noindent\textbf{Strongly coupled case.}
We now return to the system \eqref{eq:system}, where
the coupling occurs through the principal part.
For cylindrical observation sets
$\mathcal{D}=(0,T)\times\omega$, with $\omega$ nonempty and open, Lissy and Zuazua \cite{LissyZuazua2019}
established observability under the generalized Kalman rank
condition \eqref{ass:H2}.
Their argument can also be adapted to measurable spatial sets $\omega\subset\Omega$ of positive measure, by using the spectral inequality from measurable sets established in \cite{AEWZ}.

More recently, Fu, Wang, Yu, and Zhu \cite{Fu}
addressed the significantly more delicate question
of observability from arbitrary space-time measurable
sets. They considered the particular two-component
system \eqref{eq:system} with
\begin{equation*}
A^\top=
\begin{pmatrix}
a & -b\\
b & a
\end{pmatrix},
\qquad
B=
\begin{pmatrix}
1\\
0
\end{pmatrix},
\qquad M^\top=0,
\qquad a>0,\quad b\neq 0.
\end{equation*}

A major difficulty in this setting is that
high-frequency oscillations induced by the coupling
may lead to cancellations in the observed component.
Consequently, the pointwise-in-time interpolation
inequalities used to derive observability from
measurable sets in the scalar and weakly coupled
settings \cite{AEWZ,EscauriazaMontanerZhang2015}
may fail, as demonstrated in
\cite[Proposition 3.3]{Fu}.

To overcome this obstruction, the authors of
\cite{Fu} established an integral interpolation
inequality based on a Remez-type estimate, thereby
avoiding the need for pointwise-in-time interpolation
inequalities. Their proof relies on an explicit
representation of the solutions, made possible by
the particular structure of the coupling matrix.

A related quantitative problem was recently studied by Zhu \cite{ZhuVanishing2026}
for diffusion matrices $aI+\varepsilon K$, where $a>0$ and $K$ is
skew-symmetric, with observation on arbitrary measurable space-time sets.
Under a Kalman rank condition, that work determines sharp blow-up rates
of the observability cost as $\varepsilon\to0$, as well as the corresponding
minimum-norm control costs. At the level of individual modes,
\cite[Proposition 2.6]{ZhuVanishing2026} already uses the Tur\'an--Nazarov
inequality to pass to measurable time sets in arbitrary dimension.

Building on the integral interpolation approach of \cite{Fu}, we prove a
complete characterization under the generalized Kalman rank condition
\eqref{ass:H2} for general coercive diffusion matrices and arbitrary
zeroth-order coupling. This includes non-diagonalizable diffusion matrices
and rank conditions that depend on the spatial mode. The main quantitative
ingredient is a uniform finite-dimensional observability estimate with
polynomial dependence on the spatial frequency. It combines a consequence
of Nazarov's inequality for exponential polynomials with a quantitative
analysis of the mode-dependent Kalman matrix. Together with the Lebeau--Robbiano strategy, this yields
observability from arbitrary space-time measurable sets. 

\subsection{Main results}\label{sec-main-result}

We are now in a position to state our main result.

\begin{theorem}\label{theorem:obs}
Assume the parabolicity condition \eqref{ass:H1}.
Then the following assertions are equivalent.
\begin{itemize}
    \item The generalized Kalman rank condition \eqref{ass:H2} holds.
    \item For every $T>0$ and every measurable set
    $\mathcal{D}\subset(0,T)\times\Omega$ with
    $|\mathcal{D}|>0$, there exists a constant
    $C=C(A,M,B,T,\mathcal{D},\Omega)>0$ such that
    every solution of \eqref{eq:system} satisfies
    \begin{equation}\label{eq:observability}
        \|z(T)\|_{L^2(\Omega;\mathbb{R}^m)}
        \leq C\int_{\mathcal{D}}
        |B^\top z(t,x)|\,dx\,dt.
    \end{equation}
\end{itemize}
\end{theorem}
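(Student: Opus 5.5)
The plan is to prove both implications, with nearly all the work in sufficiency.

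\emph{Necessity.} Suppose \eqref{ass:H2} fails for some $k$. Then there is $\xi\in\mathbb{C}^m\setminus\{0\}$ such that $B^\top(-\lambda_k A^\top+M^\top)^j\xi=0$ for all $j$. Hence the modal solution $z(t,x)=e^{t(-\lambda_kA^\top+M^\top)}\xi\,e_k(x)$ satisfies $B^\top z\equiv0$, while $z(T)\neq0$ because the propagator is invertible. Taking real or imaginary parts gives a real solution that contradicts \eqref{eq:observability} for every $\mathcal{D}$.

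\emph{Sufficiency, step 1: uniform modal estimate.} For each $k$ set $L_k=-\lambda_kA^\top+M^\top$. The entries of $B^\top e^{tL_k}\xi$ are exponential polynomials $\sum_j p_j(t)e^{\mu_j t}$. Their total degree is at most $m$, and by \eqref{ass:H1} the eigenvalues satisfy $\operatorname{Re}\mu_j\le-\alpha'\lambda_k+C$ and $|\operatorname{Im}\mu_j|\lesssim\lambda_k$.

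Nazarov's (Tur\'an--Nazarov) inequality on an interval $J\subset(0,T)$, combined with a measurable $E\subset J$, gives
\[
\sup_J|f|\le\bigl(C|J|/|E|\bigr)^{m-1}e^{|J|\max|\operatorname{Re}\mu_j|}\int_E|f|/|E|.
\]
To avoid the factor $e^{c\lambda_k|J|}$, I will rescale the time variable. The target is
\[
|e^{TL_k}\xi|\le C e^{C\sqrt{\lambda_k}}\ \text{or}\ C\lambda_k^{N}\int_E|B^\top e^{tL_k}\xi|\,dt.
\]
I expect to get this by applying the estimate to the normalized output and comparing $\sup_J|B^\top e^{tL_k}\xi|$ with $|e^{t_0L_k}\xi|$. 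That comparison is the quantitative Kalman step. Write $L_k=\lambda_k(-A^\top+\lambda_k^{-1}M^\top)$. The Kalman matrix of $(L_k,B^\top)$, suitably rescaled, is a polynomial matrix in $\varepsilon=\lambda_k^{-1}$. It has full rank for all $\varepsilon=1/\lambda_k$, and as $\varepsilon\to0$ its smallest singular value degenerates at most polynomially; this is a Puiseux/Łojasiewicz argument on the determinant of a full-rank minor. Hence on a window of length $\sim\lambda_k^{-1}$ one has
\[
|\eta|\le C\lambda_k^{N}\sum_{j<m}\lambda_k^{-j}|B^\top L_k^j\eta|\le C'\lambda_k^{N'}\sup_{|s|\le\lambda_k^{-1}}|B^\top e^{sL_k}\eta|,
\]
using Markov-type bounds on derivatives of exponential polynomials.

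\emph{Step 2: from modes to the system.} To pass to the measurable space-time set $\mathcal{D}$, I follow \cite{AEWZ,Fu}. First, by Fubini, pick a set $E\subset(0,T)$ of positive measure such that $|\omega_t|\ge\gamma>0$ for $t\in E$, where $\omega_t$ is the time slice of $\mathcal{D}$. Next, for $z$ in the span of the modes with $\lambda_k\le\Lambda$, combine the spectral inequality from measurable sets (valid for locally star-shaped Lipschitz $\Omega$) in space with the modal time estimate. The goal is an integral interpolation inequality of the form
\[
\|z(T)\|\le Ce^{C\sqrt\Lambda}\int_{\mathcal{D}}|B^\top z|.
\]
A subtle point here is that the modes are coupled through $B^\top$ only after the spatial integration. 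The way I would handle it is to treat $t\mapsto\langle B^\top z(t),\varphi\rangle$ as an exponential polynomial with $O(\Lambda^{d/2})$ terms. An alternative is to use a Remez inequality in time applied after the spectral inequality at each time of $E$, keeping the polynomial-loss bounds.

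Finally, the Lebeau--Robbiano telescoping argument with the dissipation $e^{-c\Lambda t}$ of high frequencies gives $L^1$ observability on $\mathcal{D}$. Density points of $E$ let us use a sequence of shrinking subintervals, as in \cite{PhungWang,AEWZ}.

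\emph{Main obstacle.} I expect the hardest part to be the quantitative Kalman estimate, uniform in $k$ with only polynomial loss, when $A$ is non-diagonalizable. In that case eigenvalues of $L_k$ may cluster and Jordan blocks appear, so the Nazarov constants and the smallest singular value of the Kalman matrix must be controlled simultaneously. I would first try the algebraic-function expansion of $\det$ of a maximal minor in $\varepsilon$ to get polynomial lower bounds.
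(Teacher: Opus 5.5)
Your skeleton matches the paper's: rescale each mode so that $G_k=A^\top-M^\top/\lambda_k$ is uniformly bounded, apply Nazarov's inequality with at most $m$ frequencies, recover the state from the observation jet through one fixed maximal minor whose determinant $s^qh(s)$, $h(0)\neq0$, is a polynomial in $s=1/\lambda_k$ (the finitely many small $\lambda_k$ are handled by compactness), then use the spectral inequality at each $t\in E$, Lebeau--Robbiano, and density-point telescoping. Your compactness/Markov bound on derivatives is an acceptable substitute for the paper's Cayley--Hamilton plus Taylor argument.

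The step that fails as written is your primary way of combining the modes. Suppose you treat $t\mapsto\langle B^\top z(t),\varphi\rangle$ as a single exponential polynomial with $N\sim\Lambda^{d/2}$ frequencies. Nazarov's bound then carries $(C|J|/|E|)^{N-1}$, of order $e^{c\Lambda^{d/2}}$, plus $e^{c\Lambda|J|}$ from the real parts. The decay $e^{-c\Lambda S_1}$ cannot absorb this once $d\ge2$ and $S_1$ is small. Moreover, a fixed pairing does not control $\int_{\mathcal D_t}|B^\top z(t,x)|\,dx$ when the slices move with $t$.

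In fact there is no coupling to overcome. At each fixed $t$, Parseval gives $\|B^\top P_\Lambda z(t)\|_{L^2(\Omega)}^2=\sum_{\lambda_k\le\Lambda}|b_k(t)|^2$ with $b_k(t)=B^\top e^{tL_k}z_k$. The modal bounds $|e^{S_2L_k}z_k|\le H_\Lambda\int_E|b_k(t)|\,dt$, with $H_\Lambda$ polynomial in $\Lambda$, then combine by Minkowski's integral inequality into $\|P_\Lambda z(S_2)\|_{L^2}\le H_\Lambda\int_E\|B^\top P_\Lambda z(t)\|_{L^2}\,dt$, with no loss in the number of modes. Only afterwards is the spectral inequality applied at each $t\in E$, replacing the integrand by $e^{C\sqrt\Lambda}\|\chi_{\mathcal D_t}B^\top P_\Lambda z(t)\|_{L^1}$. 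This is your ``alternative'', with the order made precise; use it.

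Two smaller corrections. First, rescaling time does not by itself remove $e^{c\lambda_k|J|}$, because $|J|\max|\operatorname{Re}\mu|$ is invariant under dilation. What removes it is a window of length $\lambda_k^{-1}$, chosen by pigeonhole among the $N\le1+\lambda_k(S_2-S_1)$ windows so that it contains at least $|E|/N$ of $E$. You then propagate forward with $\|e^{tL_k}\|\le e^{(\|M\|-\alpha\lambda_k)t}$, which follows from \eqref{ass:H1}. The total cost is polynomial in $\lambda_k$ and $1/|E|$ (Step~4 in the proof of Proposition~\ref{thm:remez_kalman}); an arbitrary short window may miss $E$ entirely.

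Second, the obstacle you single out is not one. Nazarov's constant depends only on the number of frequencies and on $|I|\max|\operatorname{Re}\mu|$, not on their separation. Also, $t^ke^{-\mu t}$ is the uniform limit on $I$ of $\varepsilon^{-k}(e^{\varepsilon t}-1)^ke^{-\mu t}$, and the resulting exponential sum keeps at most $m$ frequencies in total. Clustering and Jordan blocks therefore cost nothing, and the Nazarov and Kalman constants never need to be controlled simultaneously.
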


\begin{remark}
We make a few comments on Theorem \ref{theorem:obs}.
\begin{itemize}

\item Theorem~\ref{theorem:obs} extends the parabolic observability
result of Lissy and Zuazua \cite{LissyZuazua2019} from cylindrical sets
$(0,T)\times\omega$, with $\omega$ nonempty and open, to arbitrary
space-time measurable sets of positive measure, under the same generalized
Kalman rank condition. It also extends \cite[Theorem 1.1]{Fu} from their
two-component model to arbitrary coupling and observation matrices
satisfying the parabolicity and Kalman rank conditions.

\item The observability inequality
\eqref{eq:observability} is stronger than
the standard $L^2$-$L^2$ observability
inequality \eqref{eq:observabilityintro},
since the observation is measured in
$L^1(\mathcal{D})$ rather than $L^2(\mathcal{D})$.
We state the result in this form because
the $L^1$ estimate naturally arises in our proof.

\item The necessity of the generalized Kalman rank condition \eqref{ass:H2} for the system is straightforward. Indeed, assume that \eqref{ass:H2} fails for some index \(j_0\ge1\). Then there exists \(v\in\mathbb{R}^m\setminus\{0\}\) such that
\begin{align*}
B^\top (-\lambda_{j_0} A^\top+M^\top)^k v=0, \qquad k=0,\ldots,m-1.
\end{align*}
It follows from the Cayley--Hamilton theorem that $B^\top e^{t(-\lambda_{j_0} A^\top+M^\top)}v=0$ for every $t\geq 0$.
Let \(e_{j_0}\) be a nonzero Dirichlet eigenfunction of \(-\Delta\) with eigenvalue \(\lambda_{j_0}>0\).
Then 
\[
z(t,x)=e^{t(-\lambda_{j_0} A^\top+M^\top)}v\,e_{j_0}(x)
\]
solves \eqref{eq:system} and satisfies
\(B^\top z=0\) in \((0,T)\times\Omega\),
while \(z(T)\neq 0\).
This contradicts \eqref{eq:observability}.
Thus, the main contribution of the paper is the proof of sufficiency.

\end{itemize}
\end{remark}

By duality, Theorem~\ref{theorem:obs} yields null controllability with bounded controls supported on arbitrary measurable sets.
\begin{corollary}\label{cor:null-control}
Assume \eqref{ass:H1}. The Kalman rank condition \eqref{ass:H2} holds if and only if, for every $T>0$, every measurable set $\mathcal D\subset(0,T)\times\Omega$ of positive measure, and every $y_0\in L^2(\Omega;\mathbb R^m)$, there exists $u\in L^\infty(\mathcal D;\mathbb R^r)$ such that the solution of
\begin{equation}\label{eq:controlled-system}
\begin{cases}
\partial_t y-A\Delta y=My+\chi_{\mathcal D}Bu
&\text{in }(0,T)\times\Omega,\\
y=0&\text{on }(0,T)\times\partial\Omega,\\
y(0)=y_0&\text{in }\Omega
\end{cases}
\end{equation}
satisfies $y(T)=0$. Moreover, under \eqref{ass:H2}, the control can be chosen so that
\begin{equation}\label{eq:control-cost}
\|u\|_{L^\infty(\mathcal D;\mathbb R^r)}
\le C\|y_0\|_{L^2(\Omega;\mathbb R^m)},
\qquad C=C(A,M,B,T,\mathcal D,\Omega).
\end{equation}
\end{corollary}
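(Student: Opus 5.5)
The corollary follows from Theorem~\ref{theorem:obs} by a standard $L^1$--$L^\infty$ duality argument. For necessity, the natural first step is to show that null controllability with $L^\infty$ controls implies the observability inequality \eqref{eq:observability}. One could use the closed graph/open mapping theorem, but a more direct route is to test the controlled system against adjoint solutions.

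Let $z$ solve \eqref{eq:system} with datum $z_T$, run backward in time. Duality gives
\[
\langle y(T),z_T\rangle-\langle y_0,z(0)\rangle=\int_{\mathcal D}u\cdot B^\top z\,dx\,dt .
\]
If \eqref{ass:H2} fails at $\lambda_{j_0}$, take the modal solution from the remark, with $B^\top z\equiv0$ and $z(0)\neq0$. Choosing $y_0=z(0)$ gives $y(T)=0$ for any control, so
\[
0=-|z(0)|^2\|e_{j_0}\|^2+0,
\]
a contradiction. This settles necessity directly.

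For sufficiency, assume \eqref{ass:H2}. Let $\Phi$ denote the backward adjoint solution map $z_T\mapsto z$, where $z$ solves $-\partial_t z-A^\top\Delta z=M^\top z$ with $z(T)=z_T$. Reversing time, Theorem~\ref{theorem:obs} applies: the reversed set $\{(T-t,x)\}$ is still measurable with positive measure. It gives
\[
\|z(0)\|_{L^2}\le C\int_{\mathcal D}|B^\top z|\,dx\,dt .
\]
Consider the subspace
\[
X=\{B^\top z|_{\mathcal D}: z_T\in L^2\}\subset L^1(\mathcal D;\mathbb R^r)
\]
and the linear functional
\[
\ell(B^\top z|_{\mathcal D})=-\langle y_0,z(0)\rangle .
\]
The observability estimate is exactly what makes $\ell$ well defined (if the observation vanishes then $z(0)=0$) and bounded, with norm at most $C\|y_0\|_{L^2}$.

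By Hahn--Banach, $\ell$ extends to all of $L^1(\mathcal D;\mathbb R^r)$ with the same norm. Since $(L^1)^*=L^\infty$ (finite measure), there is a $u\in L^\infty$ with
\[
\int_{\mathcal D}u\cdot B^\top z=-\langle y_0,z(0)\rangle \quad\text{for every } z_T,
\]
and $\|u\|_\infty\le C\|y_0\|$. The duality identity then yields $\langle y(T),z_T\rangle=0$ for all $z_T$, hence $y(T)=0$, which also gives \eqref{eq:control-cost}.

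The only delicate point is justifying the duality identity for $L^\infty$ (hence $L^2$) sources and $L^2$ data. I would first prove it for smooth data by integrating by parts, using the Dirichlet conditions and the fact that $A\Delta$ and $A^\top\Delta$ are formally adjoint. Density together with the well-posedness of both systems, including continuity of $u\mapsto y(T)$ from $L^2$, then extends it to the general case.
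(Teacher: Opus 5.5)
Your proposal is correct and follows the paper's proof essentially step for step. You obtain sufficiency from the time-reversed Theorem~\ref{theorem:obs} combined with Hahn--Banach and $L^1$--$L^\infty$ duality, and necessity from the backward mode $e^{(T-t)(-\lambda_{j_0}A^\top+M^\top)}v\,e_{j_0}$ with $y_0=\varphi(0)$ in the duality identity; your density justification of that identity is a small addition the paper leaves implicit, while your opening remark about first deriving observability is unnecessary, and ``gives $y(T)=0$ for any control'' should read ``if some control gave $y(T)=0$''.
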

\begin{proof}
Let $\varphi$ solve the backward adjoint equation
$-\partial_t\varphi-A^\top\Delta\varphi=M^\top\varphi$, with zero Dirichlet data and terminal datum $\varphi_T\in L^2(\Omega;\mathbb R^m)$.
Applying Theorem~\ref{theorem:obs} to $z(t)=\varphi(T-t)$ on the reflected set $\{(T-t,x):(t,x)\in\mathcal D\}$ gives
\[
\|\varphi(0)\|_{L^2(\Omega;\mathbb R^m)}
\le C\|B^\top\varphi\|_{L^1(\mathcal D;\mathbb R^r)}.
\]
Thus the functional $B^\top\varphi|_{\mathcal D}\mapsto-\langle y_0,\varphi(0)\rangle$ is well defined and bounded on a subspace of $L^1(\mathcal D;\mathbb R^r)$, with norm at most $C\|y_0\|_{L^2}$.
The Hahn--Banach theorem and $L^1$--$L^\infty$ duality give a representing control $u$ satisfying \eqref{eq:control-cost} and $$-\langle y_0,\varphi(0)\rangle = \int_{\mathcal D}u\cdot B^\top\varphi\,dx\,dt.$$
 The duality identity
\[
\langle y(T),\varphi_T\rangle
=\langle y_0,\varphi(0)\rangle
+\int_{\mathcal D}u\cdot B^\top\varphi\,dx\,dt=0
\]
then yields $y(T)=0$.
Conversely, if \eqref{ass:H2} fails at an eigenvalue $\lambda_k$, there is a nonzero $v$ such that $B^\top e^{t(-\lambda_kA^\top+M^\top)}v=0$ for all $t\ge0$. The corresponding backward mode $\varphi(t,x)=e^{(T-t)(-\lambda_kA^\top+M^\top)}v\,e_k(x)$ has $B^\top\varphi=0$ and $\varphi(0)\ne0$. Taking $y_0=\varphi(0)$ in the duality identity precludes $y(T)=0$.
\end{proof}

\begin{example}\label{ex:three-components}
Consider a system with three components and scalar observation, with
\[
A=\begin{pmatrix}1&0&0\\1&1&0\\0&1&1\end{pmatrix},\qquad
M=\begin{pmatrix}0&0&0\\0&1&0\\0&0&2\end{pmatrix},\qquad
B=\begin{pmatrix}1\\0\\0\end{pmatrix}.
\]
The matrix $A$ is non-diagonalizable, with a single Jordan block of size three, and satisfies
$\langle AX,X\rangle\ge(1-1/\sqrt2)|X|^2$ for all $X\in\mathbb R^3$.
For $D_\lambda=-\lambda A+M$, direct computation gives
\[
[B\ D_\lambda B\ D_\lambda^2B]
=\begin{pmatrix}
1&-\lambda&\lambda^2\\
0&-\lambda&2\lambda^2-\lambda\\
0&0&\lambda^2
\end{pmatrix},\qquad
\det[B\ D_\lambda B\ D_\lambda^2B]=-\lambda^3.
\]
Hence \eqref{ass:H2} holds at every Dirichlet eigenvalue. Theorem~\ref{theorem:obs}
therefore yields observability from any measurable space-time set of positive
measure by observing only the first component $B^\top z=z_1$.
This example lies outside both the two-component model of \cite{Fu}
and the diffusion family $aI+\varepsilon K$ with $K$ skew-symmetric
considered in \cite{ZhuVanishing2026}.
\end{example}

\subsection{Strategy of the proofs}\label{sec:strategy}

The main difficulty is to combine partial observation of the components
with observation on an arbitrary measurable subset of space-time.
As shown in \cite{Fu}, the oscillations generated by the principal coupling
can prevent pointwise-in-time interpolation estimates. We therefore work
with time-integrated observations. The new ingredient is a quantitative
estimate for general matrix evolutions, which treats non-diagonalizable
couplings and controls the dependence on the spatial frequency.

\emph{Uniform propagation from measurable time sets.}
After projection onto a Laplace eigenmode and rescaling time, the observed
quantity is $B^\top e^{-tG_j}\eta$, where
$G_j=A^\top-M^\top/\lambda_j$. Its scalar components are exponential
polynomials, with polynomial factors accounting for the Jordan blocks.
Proposition~\ref{prop-Remez} extends Nazarov's inequality to these functions
by approximating the polynomial factors with finite differences of
exponentials. The number of frequencies remains at most $m$, and the
constants are uniform in $j$. In particular, the estimate does not require
uniform bounds on Jordan bases or separation of the eigenvalues.
This replaces the special trigonometric representation used in \cite{Fu}
and permits general coupling matrices. For comparison, the Carleman approach
of Fern\'andez-Cara, Gonz\'alez-Burgos, and de Teresa
\cite[Theorem 1.1]{FernandezCara2015} for non-diagonalizable diffusion
matrices and space-time dependent zeroth-order coefficients requires
Jordan blocks of size at most four. In our constant-matrix setting,
the exponential-polynomial argument imposes no restriction on the size
of the Jordan blocks.

\emph{Direct quantitative observability of each mode.}
Quantitative estimates for fast controls in finite dimension provide an
important background for this step. Seidman \cite{Seidman1988} identified
the small-time growth of the minimum $L^2$ control cost in terms of the
Kalman structure; Seidman and Yong \cite{SeidmanYong1996} treated more
general $L^q$ control norms. Related constructive estimates based on rank
conditions can be found in \cite{Triggiani,Aze}. In the coupled parabolic
setting, Lissy and Zuazua \cite[Lemma 3.1]{LissyZuazua2019} already track
both the time horizon and the mode parameter in finite-dimensional
observability estimates used in their Lebeau--Robbiano argument.
Here too, a small-time estimate for a fixed matrix is insufficient:
we must control simultaneously the interval length $\ell$, the measurable
observation set $E$, and the spatial frequency $\lambda_j$. In particular,
the constant hidden in a fixed-matrix estimate cannot be allowed to depend
arbitrarily on $j$. The estimate below makes these dependencies explicit
for general coercive diffusion matrices and arbitrary zeroth-order coupling.

The central estimate is Proposition~\ref{thm:remez_kalman}. Its proof
separates recovery of the observation derivatives from recovery of the
state. Cayley--Hamilton gives a scalar differential equation, with uniformly
bounded coefficients, for each observed component. A Taylor expansion,
together with polynomial norm equivalence, then bounds the first $m$
observation derivatives by the time integral of the observation.
Using the weighted sum of derivatives
$\sum_{k=0}^{m-1}\ell^k|f^{(k)}(0)|$ makes the Taylor remainder absorbable
on an interval whose length is independent of the conditioning of the
Kalman matrix. The state is subsequently recovered from these derivatives
using the rank condition. Since the entries of the observability matrix
are polynomial in $1/\lambda_j$, a nonzero minor yields a loss bounded by
$(1+\lambda_j)^{m(m-1)}$. Combining the two estimates with the Remez
inequality gives
\[
|\eta|\le\frac{C(1+\lambda_j)^{m(m-1)}}{|E|^m}
\int_E|B^\top e^{-tG_j}\eta|\,dt,
\qquad E\subset(0,\ell),\quad0<\ell\le1.
\]
Thus all modes are covered by the same observability argument, with no
additional negative power of $\ell$. The polynomial dependence on frequency
is essential for the subsequent spectral argument.

\emph{From individual modes to the full system.}
Minkowski's inequality combines the mode estimates without a loss depending
on the number of retained modes. The spatial spectral inequality from
measurable sets then gives the low-frequency observation estimate.
Together with high-frequency parabolic dissipation, it yields the
integral interpolation inequality of Theorem~\ref{thm-interpolation}
by the Lebeau--Robbiano strategy. Finally, a density-point and telescoping
argument, following \cite{PhungWang,AEWZ,Fu}, gives
Theorem~\ref{theorem:obs}.
These final steps use established methods. The contribution lies in the
uniform matrix estimates that allow them to apply under the general
Kalman rank condition.

\section{Preliminaries}
In this section, we first introduce the basic notation used throughout the paper. We then recall several lemmas that are crucial for the proof of Theorem~\ref{theorem:obs} and can be found in the literature.

Let \(\mathbb{N}^+ := \{1, 2, \dots\}\) and \(\mathbb{R}^+ := (0, +\infty)\). For \(d \in \mathbb{N}^+\), \(r \in \mathbb{R}^+\), and \(x_0 \in \mathbb{R}^d\), we denote by \(B_r^d(x_0)\) the closed ball in \(\mathbb{R}^d\) centered at \(x_0\) with radius \(r\). In particular, we write \(B_r^d:=B_r^d(0)\). For any Lebesgue measurable set \(E \subset \mathbb{R}^d\), \(|E|\) denotes its Lebesgue measure. Given a matrix \(Q \in \mathbb{R}^{m \times n}\) with \(m, n \in \mathbb{N}^+\), we denote by \(Q^\top\) its transpose. For a Banach space \(X\), \(\|\cdot\|_X\) denotes its norm. In the finite-dimensional Euclidean space \(\mathbb{R}^d\), the norm \(\|\cdot\|_{\mathbb{R}^d}\) is the standard Euclidean norm; when no confusion can arise, we abbreviate it as \(|\cdot|\). If \(X\) is a Hilbert space, \(\langle \cdot, \cdot \rangle_X\) denotes its inner product.

Throughout the paper, the dependence of constants on the fixed domain
$\Omega$ is left implicit unless otherwise stated.

We next recall the following technical lemma, which is standard in the derivation of observability inequalities for the heat equation from space-time measurable sets; see \cite{AEWZ,Fu}.

\begin{lemma}\label{lemma-Fu-1}
Suppose that \(T > 0\) and \(B_R^d(x_0) \subset \Omega\) for some \(R > 0\) and \(x_0 \in \Omega\). Let \(\mathcal{D} \subset (0,T)\times B_R^d(x_0)\) be a measurable set with \(|\mathcal{D}| > 0\). Set
\begin{align}
    \mathcal{D}_t &:= \{x \in \Omega : (t, x) \in \mathcal{D}\} \text{ for } t \in (0, T), \\
    E &:= \left\{ t \in (0, T) : |\mathcal{D}_t| \ge \frac{|\mathcal{D}|}{2T} \right\}. 
\end{align}
Then, the following statements hold:
\begin{enumerate}
    \item[(i)] \(\mathcal{D}_t\) is a measurable subset of \(\Omega\) for a.e. \(t \in (0, T)\).
    
    \item[(ii)] For any \(f \in L^1((0,T)\times\Omega)\), the function \(t \mapsto \int_{\Omega} \chi_{\mathcal{D}_t}(x)|f(t, x)|dx\) is integrable over \((0, T)\).
    
    \item[(iii)] \(E\) is a measurable subset of \((0, T)\) and \(|E| \ge \frac{|\mathcal{D}|}{2|B_R^d|}\).
    
    \item[(iv)] \(\chi_E(t) \chi_{\mathcal{D}_t}(x) \le \chi_{\mathcal{D}}(t,x)\) in \((0,T)\times\Omega\).
\end{enumerate}
\end{lemma}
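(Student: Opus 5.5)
The plan is to argue via Fubini--Tonelli and Chebyshev-type bounds on the slice-measure function $t\mapsto|\mathcal D_t|$.

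For (i), regard $\chi_{\mathcal D}$ as a nonnegative measurable function on $(0,T)\times\Omega$, extended by zero. Tonelli's theorem for the completed Lebesgue product measure says that, for a.e.\ $t$, the section $x\mapsto\chi_{\mathcal D}(t,x)=\chi_{\mathcal D_t}(x)$ is Lebesgue measurable. It also says that $t\mapsto|\mathcal D_t|$, defined almost everywhere, is measurable, and that
\[
\int_0^T|\mathcal D_t|\,dt=|\mathcal D|.
\]

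For (ii), apply the same theorem to the measurable function $\chi_{\mathcal D}|f|$. It gives measurability of
\[
t\mapsto\int_\Omega\chi_{\mathcal D_t}|f(t,x)|\,dx
\]
and the bound
\[
\int_0^T\int_\Omega\chi_{\mathcal D_t}|f|\,dx\,dt=\int_{\mathcal D}|f|\le\|f\|_{L^1}<\infty,
\]
which is the claimed integrability.

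For (iii), measurability of $E$ follows because it is a superlevel set of the measurable function $t\mapsto|\mathcal D_t|$, up to a null set. For the measure bound, use $\mathcal D_t\subset B_R^d(x_0)$ to get $|\mathcal D_t|\le|B_R^d|$ for every $t$. Then split the Tonelli integral over $E$ and over $(0,T)\setminus E$:
\[
|\mathcal D|=\int_E|\mathcal D_t|\,dt+\int_{(0,T)\setminus E}|\mathcal D_t|\,dt\le |E|\,|B_R^d|+T\cdot\frac{|\mathcal D|}{2T}.
\]
Hence $|E|\,|B_R^d|\ge|\mathcal D|/2$, which is the claim.

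For (iv), both sides take only the values $0$ and $1$. If the left side equals $1$, then $t\in E$ and $x\in\mathcal D_t$, which means $(t,x)\in\mathcal D$. So the right side equals $1$ as well, and the inequality holds in all cases.

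There is no serious obstacle. The only point needing care is that sections of a Lebesgue (as opposed to Borel) measurable set are measurable only for a.e.\ $t$. This is handled by defining $|\mathcal D_t|$ arbitrarily, for instance as $0$, on the exceptional null set; that choice changes neither $E$'s measurability nor $|E|$.
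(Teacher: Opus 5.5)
Your proof is correct. The paper does not prove this lemma itself; it cites \cite{AEWZ,Fu}, where the lemma rests on exactly your Fubini--Tonelli argument, with the key inequality $|\mathcal D|=\int_0^T|\mathcal D_t|\,dt\le |E|\,|B_R^d|+T\cdot\frac{|\mathcal D|}{2T}$, and your appeal to completeness of Lebesgue measure correctly handles the null set of non-measurable sections in (i) and (iii).
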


Let \(\{\lambda_j\}_{j\in\mathbb{N}^+}\), with \(0 < \lambda_1 < \lambda_2 \leq \cdots\) and \(\lambda_j \to +\infty\) as \(j \to +\infty\), be the eigenvalues of \(-\Delta\) under the homogeneous Dirichlet boundary condition. Let $\{e_j\}_{j\ge 1}$ be the corresponding Dirichlet eigenfunctions of $-\Delta$ on $\Omega$, constituting an orthonormal basis of $L^2(\Omega)$.

The following lemma will also be used to prove Theorem \ref{theorem:obs}. 

\begin{lemma}\label{lemma-Fu-2}
Suppose that \(T > 0\) and \(B_{4R}^d(x_0) \subset \Omega\) for some \(R > 0\) and \(x_0 \in \Omega\). Let \(\mathcal{D} \subset (0,T)\times B_R^d(x_0)\) be measurable with $|\mathcal{D}|>0$. Let $E$ and $\mathcal D_t$ be defined as in Lemma~\ref{lemma-Fu-1}. For every fixed $\Lambda_1>0$, there exists a constant $C(R,|\mathcal D|/T)>0$ such that, for almost every $t\in E$, every family $\{a_j\}\subset\mathbb R^r$, and every $\Lambda>\Lambda_1$,
\begin{equation}\label{eq-vector-spectral-inequality}
    \sum_{\lambda_j\le \Lambda}^{} |a_j|^2\le C(R, |\mathcal{D}|/T) e^{C(R, |\mathcal{D}|/T)\sqrt{\Lambda}} \left( \left\| \  \chi_{\mathcal{D}_t} \sum_{\lambda_j\le \Lambda}^{} a_j e_j \right\|_{L^1(\Omega; \mathbb{R}^r)} \right)^2  .
\end{equation}
\end{lemma}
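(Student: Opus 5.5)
The plan is to reduce \eqref{eq-vector-spectral-inequality} to the known scalar spectral inequality from measurable sets (Apraiz--Escauriaza--Wang--Zhang \cite{AEWZ}), applied componentwise, and then to pass from the $L^2$ norm on the observation set to the $L^1$ norm using the analyticity-based $L^1$ version of that inequality.

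\textbf{Step 1 (scalar inequality).} Recall from \cite{AEWZ} (see also \cite{Fu}) the scalar estimate: for $B_{4R}^d(x_0)\subset\Omega$, every measurable $\omega\subset B_R^d(x_0)$ with $|\omega|\ge \rho>0$, every $\Lambda>\Lambda_1$ and every real sequence $\{b_j\}$,
\[
\Bigl(\sum_{\lambda_j\le\Lambda}b_j^2\Bigr)^{1/2}\le C(R,\rho)e^{C(R,\rho)\sqrt\Lambda}\int_\omega\Bigl|\sum_{\lambda_j\le\Lambda}b_je_j\Bigr|\,dx .
\]
This comes from the quantitative analytic estimate for $\phi=\sum b_je_j$ lifted to $\phi(x)\cosh(\sqrt\Lambda\,y)$-type harmonic extensions, together with the Vessella/Nadirashvili propagation of smallness from measurable sets; the $L^1$ norm is allowed because the propagation-of-smallness estimate controls $\sup_{B_{2R}}|\phi|$ by $(\frac{1}{|\omega|}\int_\omega|\phi|)^{\theta}(\sup_{B_{4R}}|\phi|)^{1-\theta}$ after a Chebyshev/Remez step on the set where $|\phi|$ is large. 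The constant depends on $|\omega|$ only through the lower bound $\rho$, which is crucial since $\omega$ will vary with $t$.

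\textbf{Step 2 (apply at fixed $t$).} By Lemma~\ref{lemma-Fu-1}(i),(iii), for a.e. $t\in E$ the slice $\mathcal D_t$ is measurable, contained in $B_R^d(x_0)$, and $|\mathcal D_t|\ge |\mathcal D|/(2T)=:\rho$. Hence Step 1 applies with $\omega=\mathcal D_t$ and constants depending only on $R$ and $|\mathcal D|/T$, uniformly in $t$.

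\textbf{Step 3 (vector case).} Write $a_j=(a_j^1,\dots,a_j^r)$. For each component $i$, Step 1 gives $\sum_j|a_j^i|^2\le C e^{C\sqrt\Lambda}\bigl(\int_{\mathcal D_t}|\sum_j a_j^ie_j|\bigr)^2$. Since $|\sum_ja_j^ie_j|\le|\sum_ja_je_j|$ pointwise, summing over $i\le r$ gives the claim with constant $rC$. Since $r$ is fixed by $B$, we absorb it; if strict dependence only on $(R,|\mathcal D|/T)$ is demanded, one can instead project onto a unit vector $\nu$ maximizing $\sum_j|\langle a_j,\nu\rangle|^2$, which captures at least $1/r$ of the mass, and the factor $r$ is harmless either way.

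The main obstacle is Step 1 with $L^1$ rather than $L^2$ and with uniformity over all measurable sets of a given measure. Here I would invoke the argument of \cite{AEWZ} directly, noting that their interpolation bound $\|\phi\|_{L^\infty(B_{2R})}\le N e^{N\sqrt\Lambda}\bigl(\frac{1}{|\omega|}\int_\omega|\phi|\bigr)$ already holds with constants depending only on $R$ and the measure lower bound. Then $\|\phi\|_{L^2(\Omega)}\le e^{C\sqrt\Lambda}\|\phi\|_{L^\infty(B_{2R})}$ follows by the standard doubling/three-ball propagation for the extended harmonic function.
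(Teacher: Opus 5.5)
Your proposal follows the paper's route. For a time $t\in E$ with measurable slice, you use $\mathcal D_t\subset B_R^d(x_0)$ and $|\mathcal D_t|\ge|\mathcal D|/(2T)$ to apply the scalar $L^1$ spectral inequality to each component $f_k=\sum_{\lambda_j\le\Lambda}a_j^ke_j$, then sum over $k$. The paper simply cites this scalar inequality from \cite[Corollary 2.6]{Fu} (see also \cite{AEWZ}) rather than sketching its derivation as in your Step 1.

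The one point to correct is the final step. Bounding each $\int_{\mathcal D_t}|f_k|$ by $\|\chi_{\mathcal D_t}F\|_{L^1}$, where $F=\sum_{\lambda_j\le\Lambda}a_je_j$, costs a factor $r$. Your projection remedy does not remove it. The largest eigenvalue of the Gram matrix $\sum_ja_ja_j^\top$ is only guaranteed to be at least $1/r$ times its trace, so you still get $\sum_j|a_j|^2\le r\sum_j|\langle a_j,\nu\rangle|^2$. Hence your constant depends on $r$, whereas the statement's notation $C(R,|\mathcal D|/T)$ does not allow this.

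The paper avoids the loss with the triangle inequality for $\mathbb R^r$-valued integrals:
\[
\Bigl[\sum_{k=1}^r\Bigl(\int_{\mathcal D_t}|f_k|\,dx\Bigr)^2\Bigr]^{1/2}
=\Bigl|\int_{\mathcal D_t}(|f_1|,\ldots,|f_r|)\,dx\Bigr|_{\mathbb R^r}
\le\int_{\mathcal D_t}\Bigl(\sum_{k=1}^r|f_k|^2\Bigr)^{1/2}dx
=\|\chi_{\mathcal D_t}F\|_{L^1(\Omega;\mathbb R^r)}.
\]
Inserting this after summing the componentwise inequalities gives the lemma with a constant independent of $r$. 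With this replacement your argument coincides with the paper's. For the later use in \eqref{eq-assumption-Lebeau}, where $r$ is fixed, your extra factor $r$ would in fact be harmless.
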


\begin{proof}
Fix a time $t\in E$ for which the slice is measurable and write $a_j=(a_j^1,\ldots,a_j^r)$. Set
\[
f_k=\sum_{\lambda_j\le\Lambda}a_j^k e_j,
\qquad F=(f_1,\ldots,f_r)=\sum_{\lambda_j\le\Lambda}a_j e_j.
\]
Applying the scalar spectral inequality from \cite[Corollary 2.6]{Fu}
(see also \cite{AEWZ}) to each $f_k$ and summing over $k$, we obtain
\begin{equation}\label{eq-264}
\sum_{\lambda_j\le\Lambda}|a_j|^2
=\sum_{k=1}^r\sum_{\lambda_j\le\Lambda}|a_j^k|^2
\le Ce^{C\sqrt\Lambda}\sum_{k=1}^r
\left(\int_{\mathcal D_t}|f_k(x)|\,dx\right)^2,
\end{equation}
where $C=C(R,|\mathcal D|/T)$ is independent of $k$ and $r$.
The triangle inequality for the integral in $\mathbb R^r$ gives
\begin{align*}
\left[\sum_{k=1}^r\left(\int_{\mathcal D_t}|f_k(x)|\,dx\right)^2\right]^{1/2}
&=\left|\int_{\mathcal D_t}(|f_1(x)|,\ldots,|f_r(x)|)\,dx\right|_{\mathbb R^r}\\
&\le\int_{\mathcal D_t}\left(\sum_{k=1}^r|f_k(x)|^2\right)^{1/2}\,dx\\
&=\|\chi_{\mathcal D_t}F\|_{L^1(\Omega;\mathbb R^r)}.
\end{align*}
Squaring this estimate and inserting it into \eqref{eq-264}
yields \eqref{eq-vector-spectral-inequality}.
\end{proof}

\section{Integral interpolation inequality}\label{sec2}

This section is devoted to proving the following integral-type interpolation observability inequality for \eqref{eq:system}, which plays a crucial role in the proof of Theorem \ref{theorem:obs}.

We write $S(t)=e^{t(A^\top\Delta+M^\top)}$ for the solution semigroup
with homogeneous Dirichlet boundary conditions. In this section, for an
initial datum $z\in L^2(\Omega;\mathbb R^m)$, we use the convention
$z(t)=S(t)z$.

We fix $\Lambda_1\ge\max\{1,2\|M\|/\alpha\}$ and use the observation
sets constructed in Section~2. More precisely, let
$\mathcal D\subset (0,T)\times B_R^d(x_0)$ have positive measure, with
$B_{4R}^d(x_0)\subset\Omega$, and set
\[
\mathcal D_t=\{x\in\Omega:(t,x)\in\mathcal D\},\qquad
E=\left\{t\in(0,T):|\mathcal D_t|\ge\frac{|\mathcal D|}{2T}\right\}.
\]
By Lemma~\ref{lemma-Fu-1}, these slices are measurable for almost every $t$,
the corresponding observation integrals are measurable, and $|E|>0$.
Lemma~\ref{lemma-Fu-2} provides a constant $D=D(R,|\mathcal D|/T)>0$
such that, for almost every $t\in E$, every $\Lambda>\Lambda_1$,
and every family $\{a_j\}\subset\mathbb R^r$,
\begin{equation}\label{eq-assumption-Lebeau}
\sum_{\lambda_j\le\Lambda}|a_j|^2
\le D e^{D\sqrt\Lambda}
\left\|\chi_{\mathcal D_t}\sum_{\lambda_j\le\Lambda}a_je_j
\right\|_{L^1(\Omega;\mathbb R^r)}^2.
\end{equation}
These properties are preserved when $E$ is replaced by a measurable subset
of positive measure, or when the time interval and the observation pair
$(E,\mathcal D_t)$ are translated together. In both cases, the spectral
constant $D$ is inherited from the original pair; it is not recomputed
from the restricted set. The result below applies to these pairs as well.
All constants may depend on the fixed domain $\Omega$.

\begin{theorem}\label{thm-interpolation}
Assume \eqref{ass:H1} and \eqref{ass:H2}, and let $(E,\mathcal D_t)$ be
an observation pair described above.
There exists a constant $C_8:= C_8(A,M,B,\Lambda_1,m,D,\Omega)>0$ such that, for every $\theta\in (0,1)$, every $0<S_1<S_2\le T$ satisfying $|E\cap [S_1,S_2]|>0$, and every $z\in L^2(\Omega;\mathbb{R}^m)$,
\begin{multline}\label{eq-interpolation-inequality}
\|z(S_2)\|_{L^2(\Omega;\mathbb{R}^m)}\\\leq
\tilde C\left(\frac{1}{|E\cap[S_1,S_2]|}\int_{S_1}^{S_2}
\chi_E(t)\|\chi_{\mathcal D_t}B^\top z(t)\|_{L^1(\Omega;\mathbb{R}^r)}\,dt\right)^{1-\theta}
\|z\|_{L^2(\Omega;\mathbb{R}^m)}^\theta.
\end{multline}
where 
$$
    \tilde{C}=\frac{C_8e^{C_8\left ( \frac{S_2}{1-\theta}+\frac{1}{\theta S_1} \right )}}{|E \cap [S_1, S_2]|^{m-1}}.
$$
\end{theorem}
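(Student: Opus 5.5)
We prove Theorem~\ref{thm-interpolation} by the Lebeau--Robbiano low/high frequency splitting, with a uniform modal observability estimate as the main new ingredient. Fix $\Lambda>\Lambda_1$. Write $z=\sum_j z_j e_j$, so that $z(t)=\sum_j e^{-t(\lambda_jA^\top-M^\top)}z_je_j$. Decompose $z(t)=z_L(t)+z_H(t)$, where $z_L$ collects the modes with $\lambda_j\le\Lambda$.

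\emph{High frequencies.} Coercivity \eqref{ass:H1} gives $\langle(\lambda A^\top-M^\top)X,X\rangle\ge(\alpha\lambda-\|M\|)|X|^2\ge\alpha\lambda|X|^2/2$ for $\lambda\ge\Lambda_1$. Hence $|e^{-t(\lambda_jA^\top-M^\top)}z_j|\le e^{-\alpha\lambda_jt/2}|z_j|$, and
\[
\|z_H(t)\|_{L^2}\le e^{-\alpha\Lambda t/2}\|z\|_{L^2}.
\]
The low modes with $\lambda_j\le\Lambda_1$ grow at most like $e^{\|M\|T}$. Together with the boundedness of $\chi_{\mathcal D_t}$ in $L^1\subset L^2$, this gives $\|\chi_{\mathcal D_t}B^\top z_H(t)\|_{L^1}\le C e^{-\alpha\Lambda t/2}\|z\|$ for $t\ge S_1$.

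\emph{Low frequencies.} This is the crucial step, and it relies on the uniform modal estimate announced in Section~\ref{sec:strategy} (Proposition~\ref{thm:remez_kalman}). For $F\subset(0,\ell)$ measurable,
\[
|\eta|\le C(1+\lambda_j)^{m(m-1)}|F|^{-m}\int_F|B^\top e^{-sG_j}\eta|\,ds .
\]
We apply it after rescaling time by $\lambda_j$, with $F=\lambda_j\bigl((E\cap[S_1,S_2])-S_1\bigr)$ if $\lambda_j(S_2-S_1)\le1$, and otherwise after restricting to a subinterval of length $1/\lambda_j$ carrying a proportional share of $E$. We take $\eta$ to be the state at time $S_1$.

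Doing this mode by mode loses the spatial structure, so it cannot be applied directly. Instead we reverse the order. For a.e.\ $t\in E$, the spectral inequality \eqref{eq-assumption-Lebeau} with $a_j=B^\top z_j(t)$ gives
\[
\Bigl(\sum_{\lambda_j\le\Lambda}|B^\top z_j(t)|^2\Bigr)^{1/2}\le De^{D\sqrt\Lambda}\|\chi_{\mathcal D_t}B^\top z_L(t)\|_{L^1}.
\]
Integrating over $E\cap[S_1,S_2]$, and using Minkowski's inequality to bring the time integral inside the $\ell^2$ sum, bounds $\bigl(\sum_j(\int_{E\cap[S_1,S_2]}|B^\top z_j(t)|dt)^2\bigr)^{1/2}$. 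The modal estimate with $F$ a subset of $E\cap[S_1,S_2]$ of size at least $|E\cap[S_1,S_2]|$ times a factor then controls $|z_j(S_1)|$. The frequency loss is $(1+\Lambda)^{m(m-1)}\le Ce^{\sqrt\Lambda}$, and the measure factor is $|E\cap[S_1,S_2]|^{-m}$. Combined with the prefactor $1/|E\cap[S_1,S_2]|$ in \eqref{eq-interpolation-inequality}, this leaves the stated power $m-1$. Propagating from $S_1$ to $S_2$ costs $e^{\|M\|T}$.

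The main obstacle is matching the time scales. The modal estimate holds on intervals of length at most $1$ in the rescaled variable, i.e.\ physical length $1/\lambda_j$. If we cannot rescale each mode independently, we instead apply it with $G=\lambda_jA^\top-M^\top$ on the unscaled interval, accepting a factor $e^{C\lambda_j(S_2-S_1)}$. That factor is absorbed into $e^{C\Lambda S_2}$, which the final optimization tolerates.

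\emph{Combining and optimizing.} We obtain
\[
\|z(S_2)\|\le Ce^{C\Lambda S_2+C\sqrt\Lambda}|E\cap[S_1,S_2]|^{-(m-1)}\,\mathcal O+Ce^{C\Lambda S_2}e^{-\alpha\Lambda S_1/2}\|z\|,
\]
where $\mathcal O$ is the normalized observation in \eqref{eq-interpolation-inequality}. The high-frequency term in the observation is absorbed by the same splitting, since it appears with the spectral constant but decays like $e^{-\alpha\Lambda S_1/2}$.

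Pure decay at time $S_2$ is cleaner. Since $\|z_H(S_2)\|\le e^{-\alpha\Lambda S_2/2}\|z\|$, we only need the low part at time $S_2$, with the observation error from $z_H$ of size $e^{-\alpha\Lambda S_1/2}$. Using $\sqrt\Lambda\le\epsilon\Lambda+1/(4\epsilon)$, we then optimize over $\Lambda\ge\Lambda_1$ to balance $e^{C\Lambda}\mathcal O$ against $e^{-c\Lambda S_1}\|z\|$. The standard Young-type choice $\Lambda\sim\frac{1}{S_1}\log(\|z\|/\mathcal O)$ yields $\mathcal O^{1-\theta}\|z\|^\theta$ with constant $e^{C(S_2/(1-\theta)+1/(\theta S_1))}$.
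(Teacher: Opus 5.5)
You have the right architecture: per-mode estimates, Minkowski's inequality, the pointwise-in-time spectral inequality, high-frequency dissipation, and optimization in $\Lambda$. The gap is in the low-frequency step, exactly at the point you call the main obstacle.

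When $\lambda_j(S_2-S_1)>1$, you restrict to a subinterval $I_j=(a_j,a_j+1/\lambda_j)$ carrying a proportional share of $E$. Yet you still take $\eta$ to be the state at time $S_1$ and claim control of $|z_j(S_1)|$. The short-time estimate on $I_j$ only recovers the coefficient at the left endpoint $a_j$. Going back to $S_1$ means running the mode backward, at a cost of order $e^{C\lambda_j(a_j-S_1)}$. Your fallback accepts precisely this loss, $e^{C\lambda_j(S_2-S_1)}$, and your combined estimate is written with it.

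That breaks the Lebeau--Robbiano mechanism. The high-frequency part of the observation enters multiplied by the low-frequency constant. So the coefficient of $\|z\|$ is of size $e^{(CS_2-\alpha S_1/2)\Lambda}$, which does not tend to zero unless $CS_2<\alpha S_1/2$. Your last paragraph, which balances $e^{C\Lambda}\mathcal O$ against $e^{-c\Lambda S_1}\|z\|$, silently drops that factor. Even then, a cost linear in $\Lambda$ cannot be absorbed by $\sqrt\Lambda\le\epsilon\Lambda+1/(4\epsilon)$. The balancing would only give exponents $\theta$ bounded below by a quantity depending on $S_2/S_1$, not every $\theta\in(0,1)$ with the stated constant.

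The missing point is to propagate forward, not backward. This is what the paper does in Step~4 of Proposition~\ref{thm:remez_kalman} and in Theorem~\ref{thm-low-frequency}.

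\emph{Short-time step.} Apply the short-time estimate to the coefficient of $z(a_j)$, at the left end of the chosen subinterval. In rescaled time the observed set has measure at least $\lambda_j\delta/\lceil\lambda_j\Delta\rceil\ge\delta/(\Delta+\lambda_1^{-1})$, where $\delta=|E\cap(S_1,S_2)|$ and $\Delta=S_2-S_1$.

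\emph{Forward propagation.} Go from $a_j$ to $S_2$ using the coercivity bound $\|e^{t(-\lambda_jA^\top+M^\top)}\|\le e^{(\|M\|-\alpha\lambda_j)t}\le e^{\|M\|t}$. This costs nothing exponential in $\lambda_j$.

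\emph{Per-mode bound.} Include the factor $\lambda_j$ from the change of variables in the time integral, and enlarge $E\cap I_j$ to $E\cap(S_1,S_2)$. Then each mode satisfies $|z_j(S_2)|\le Ce^{\|M\|\Delta}(1+\lambda_j)^{p+1}(1+\Delta)^m\delta^{-m}\int_{E\cap(S_1,S_2)}|B^\top z_j(t)|\,dt$, with $p=m(m-1)$.

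\emph{Combining modes.} That the subinterval depends on $j$ is harmless. The modes are decoupled and all right-hand sides are integrals over the same set. Minkowski's integral inequality followed by the spectral inequality gives a low-frequency constant of the form $C(1+\Lambda)^{p+1}e^{D\sqrt\Lambda}$. Nothing requires rescaling all modes simultaneously.

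With this sub-exponential cost, your splitting and Young-type optimization go through as in the paper. Use the energy bound $\|z(S_2)\|\le e^{\|M\|S_2}\|z\|$ for the regime where the optimal $\Lambda$ would fall below $\Lambda_1$.
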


We split $z(t)=P_\Lambda z(t)+(I-P_\Lambda)z(t)$ into low and high frequencies, where $P_\Lambda$ is the spectral projection
\begin{align}
    P_\Lambda u = \sum_{\lambda_j\le \Lambda}^{} \left \langle u,e_j \right \rangle e_j,\quad u\in L^2(\Omega).
\end{align}
The projection $P_\Lambda$ acts componentwise on vector-valued functions
and commutes with constant matrices.

In Section \ref{sec3.1}, we prove the low-frequency estimate \eqref{eq-final-low} for $\left \| P_\Lambda z(t) \right \|_{L^2(\Omega;\mathbb{R}^m) }$. To achieve this, we first establish in Proposition \ref{prop-Remez} a Remez--type inequality for functions of the form
$f(t) = \sum_{\nu=1}^s e^{-\mu_\nu t} \mathcal{P}_\nu(t)$,
where \(\mathcal{P}_\nu\) is a polynomial and \(\mu_1, \dots, \mu_s\) are the distinct eigenvalues of \(A-\frac{M}{\lambda_j}\in \mathbb{R}^{m\times m}\). Based on Proposition \ref{prop-Remez} and the generalized Kalman rank condition \eqref{ass:H2}, we prove the unified observability estimate in Proposition~\ref{thm:remez_kalman}, which is the key ingredient in the proof of \eqref{eq-final-low}.

In Section \ref{sec3.2}, we combine the low-frequency estimate \eqref{eq-final-low} with the Lebeau--Robbiano strategy to prove Theorem \ref{thm-interpolation}.

\subsection{Remez--type inequality and low-frequency estimate}\,\label{sec3.1}

In \cite{Fu}, the authors considered \eqref{eq:system} with
\begin{equation*}
    A^\top := \begin{pmatrix}
    a & -b \\
    b & a
    \end{pmatrix},
    \qquad
    B^\top = (1,0),
    \qquad
    M^\top =0,
\end{equation*}
where \(a>0\) and \(b\neq 0\). They defined
\begin{equation*}
    \mathcal{A} := 
        \begin{pmatrix}
        a\Delta & -b\Delta \\
        b\Delta & a\Delta
        \end{pmatrix}.
\end{equation*}
For each \(z \in L^2(\Omega; \mathbb{R}^2)\), let
\(\{z_j\}_{j \in \mathbb{N}^+} := \{(z_{1,j}, z_{2,j})^\top\}_{j \in \mathbb{N}^+} \subset \mathbb{R}^2\)
denote the Fourier coefficients of \(z\) with respect to the orthonormal basis
\(\{e_j\}_{j \in \mathbb{N}^+}\). Then
\begin{equation}\label{eq-solution-Fu}
    e^{\mathcal{A}t} z 
    = \sum_{j=1}^{+\infty} e^{-A^\top\lambda_j t} z_j e_j 
    = \sum_{j=1}^{+\infty} e^{-a\lambda_j t} 
    \begin{pmatrix}
    \cos(\lambda_j b t) & \sin(\lambda_j b t) \\
    -\sin(\lambda_j b t) & \cos(\lambda_j b t)
    \end{pmatrix}
    z_j e_j.
\end{equation}
Observing this representation, the authors established a Remez-type inequality for
\begin{multline*}
\mathcal{T}_n := \Bigl\{ f \in C([-\pi, \pi]; \mathbb{R}) : \\
    f(\theta) = \sum_{k=0}^n \big( a_k \sin(k\theta) + b_k \cos(k\theta) \big), 
 \theta \in (-\pi, \pi), \{(a_k, b_k)^\top\}_{k=0}^n \subset \mathbb{R}^2 \Bigr\}.
\end{multline*}
and used it to prove the low-frequency estimate. However, for general matrices $A,M\in\mathbb{R}^{m\times m}$ and $B\in\mathbb{R}^{m\times r}$, simple solution formulas such as \eqref{eq-solution-Fu} are no longer available. For each $z\in L^2(\Omega;\mathbb{R}^m)$, the solution of \eqref{eq:system} has the expansion
\begin{align}\label{eq:expansion}
z(t,x) = e^{(A^\top\Delta + M^\top) t}z = \sum_{j=1}^{+\infty} e^{-(A^\top - \frac{M^\top}{\lambda_j})\lambda_j t} z_j e_j.
\end{align}
where 
\begin{align}
    \{z_j\}_{j \in \mathbb{N}^+} := \{(z_{1,j}, z_{2,j},\ldots, z_{m,j})^\top\}_{j \in \mathbb{N}^+} \subset \mathbb{R}^m,
\end{align}
denote the Fourier coefficients of $z$ with respect to the orthonormal basis $\{e_j\}_{j \in \mathbb{N}^+}$.

We first analyze the form of each component of
$B^\top e^{-(A^\top - \frac{M^\top}{\lambda_j}) t} \eta$
for $\eta\in \mathbb{R}^m$, and then establish the corresponding Remez-type inequality.

Set
\begin{align*}
G_j:=A^\top-\frac{M^\top}{\lambda_j}.
\end{align*}
For all $j\ge1$, we have the uniform bound
\begin{equation}\label{eq-spectrum-uniform-bound}
\max_{\mu\in\sigma(G_j)}|\operatorname{Re}\mu|
\le\|G_j\|\le\|A\|+\frac{\|M\|}{\lambda_1}=:K_G.
\end{equation}
\begin{proposition}\label{prop-Remez}
Let \(\mu_1, \dots, \mu_s\) be the distinct eigenvalues of $G_j\in\mathbb R^{m\times m}$, with algebraic multiplicities 
\(d_1, \dots, d_s\), respectively, so that \(\sum_{\nu=1}^s d_\nu = m\). For $\eta\in\mathbb{R}^m$, the following statements hold.

(i) Any scalar component of 
$B^\top e^{-tG_j} \eta$
can be written as
\begin{align}\label{eq:gen_exp_rep}
    f(t) = \sum_{\nu=1}^s e^{-\mu_\nu t} \mathcal{P}_\nu(t), 
    \qquad 
    \sum_{\nu=1}^s (\deg \mathcal{P}_\nu + 1) \le m,
\end{align}
where each $\mathcal P_\nu$ is a polynomial with complex coefficients; zero polynomials are omitted from the sum of degrees. 

(ii) Let $j\ge1$, let
$I\subset\mathbb{R}$ be an interval with $0<|I|\le1$,
and let $E\subset I$ be a measurable subset with
$|E|>0$. Then there exists a constant $C=C(A,M,m,\lambda_1,r)>0$,
independent of $j$, such that
\begin{align}\label{eq:nazarov_extended}
    \sup_I |f| 
    \le C(A,M,m,\lambda_1,r) 
    \left( \frac{|I|}{|E|} \right)^{m-1} \sup_E |f|. 
\end{align}

(iii) Under the same assumptions,
\begin{align}\label{eq:BA}
    \int_I |B^\top e^{-tG_j} \eta|dt \le C(A,M,m,\lambda_1,r) \left(\frac{|I|}{|E|}\right)^m \int_E |B^\top e^{-tG_j} \eta|dt.
\end{align}
\end{proposition}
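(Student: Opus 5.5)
For (i), I will put $G_j$ in Jordan form over $\mathbb C$: $G_j=P J P^{-1}$. Then $e^{-tG_j}=P e^{-tJ}P^{-1}$, and on a Jordan block of size $k$ with eigenvalue $\mu_\nu$ we have $e^{-tJ_k}=e^{-\mu_\nu t}\sum_{i<k}(-t)^iN^i/i!$. Hence every entry of $e^{-tG_j}$ has the form $\sum_\nu e^{-\mu_\nu t}\mathcal P_\nu(t)$ with $\deg\mathcal P_\nu\le d_\nu-1$, because the largest Jordan block for $\mu_\nu$ has size at most $d_\nu$. Any linear combination of entries, in particular each scalar component of $B^\top e^{-tG_j}\eta$, keeps this form. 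Summing $\deg\mathcal P_\nu+1\le d_\nu$ over the nonzero $\mathcal P_\nu$ gives a total of at most $m$. (Alternatively, $f$ solves the scalar ODE $\chi_{G_j}(-d/dt)f=0$, by Cayley--Hamilton.)

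For (ii), I will reduce to Nazarov's (Turán--Nazarov) inequality. That inequality states that for $p(t)=\sum_{k=1}^n c_k e^{\omega_k t}$ with $\omega_k\in\mathbb C$ and distinct,
\[
\sup_I|p|\le\Bigl(\tfrac{A|I|}{|E|}\Bigr)^{n-1}e^{|I|\max_k|\operatorname{Re}\omega_k|}\sup_E|p|.
\]
The constant depends only on the number $n$ of terms and on $|I|\max|\operatorname{Re}\omega_k|$, not on the imaginary parts or the separation of the frequencies. By \eqref{eq-spectrum-uniform-bound} and $|I|\le1$, the exponential factor is at most $e^{K_G}$, which is uniform in $j$.

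To treat the polynomial factors, I will approximate $t^ie^{-\mu t}$ by divided differences $\varepsilon^{-i}\sum_{l=0}^i\binom il(-1)^{i-l}e^{-(\mu-l\varepsilon)t}$ (up to normalization). More concretely, I will replace $f$ by $f_\varepsilon(t)=\sum_\nu\sum_{l=0}^{\deg\mathcal P_\nu}c_{\nu,l}(\varepsilon)e^{-(\mu_\nu+l\varepsilon)t}$, with coefficients chosen so that $f_\varepsilon\to f$ uniformly on $\bar I$ as $\varepsilon\to0$. This exists because the span of $\{e^{-(\mu+l\varepsilon)t}\}_{l\le k}$ converges to the span of $\{t^ie^{-\mu t}\}_{i\le k}$. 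For small real $\varepsilon>0$ the frequencies are distinct, there are at most $m$ of them, and their real parts are bounded by $K_G+m\varepsilon$.

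Applying Nazarov to $f_\varepsilon$ and letting $\varepsilon\to0$ then gives \eqref{eq:nazarov_extended}. One technical point is that $\sup_E$ must be read as an essential supremum, or $E$ replaced by its density points; uniform convergence handles this. A second point is that distinct eigenvalues $\mu_\nu,\mu_{\nu'}$ could collide under the shift. I will take $\varepsilon$ smaller than half the minimal gap, which is fine since $j$ is fixed during the limit and the final constant does not depend on $\varepsilon$. I expect this approximation step, and checking that no constant depends on the eigenvalue separation or the Jordan bases, to be the main obstacle. The key is that Nazarov's constant depends only on $n$ and $|I|\max|\operatorname{Re}|$.

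For (iii), I will write $g(t)=|B^\top e^{-tG_j}\eta|$ and let $f_1,\dots,f_r$ be its components, so that $g\le\sum_k|f_k|\le\sqrt r\,g$. The plan is to use (ii) in a level-set form. Let $E'=\{t\in E: g(t)\le 2\int_E g/|E|\}$; by Chebyshev, $|E'|\ge|E|/2$. Applying (ii) to each $f_k$ with the set $E'$ gives
\[
\sup_I|f_k|\le C(2|I|/|E|)^{m-1}\sup_{E'}|f_k|\le C'(|I|/|E|)^{m-1}\,\frac{2}{|E|}\int_E g.
\]
Integrating over $I$ then yields
\[
\int_I g\le |I|\sum_k\sup_I|f_k|\le C''\Bigl(\tfrac{|I|}{|E|}\Bigr)^{m}\int_E g,
\]
with $C''$ depending on $r$ and $m$ as allowed.
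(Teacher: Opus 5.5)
Your proposal is correct and follows the paper's proof essentially step by step. You use the Jordan form for (i). For (ii) you use the finite-difference approximation $\varepsilon^{-k}(e^{\varepsilon t}-1)^k e^{-\mu_\nu t}$ together with Nazarov's inequality, whose real parts are bounded uniformly in $j$. For (iii) you use a Chebyshev level-set argument. The only cosmetic difference is in (iii): you take the level set of the full norm $|B^\top e^{-tG_j}\eta|$, whereas the paper takes it for each scalar component and then sums using the equivalence of the $\ell^1$ and $\ell^2$ norms on $\mathbb{R}^r$. Both work equally well.
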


\begin{proof} 
(i) By the Jordan canonical form, there exists an invertible matrix \(P\in \mathbb{C}^{m\times m}\) such that \(G_j = P J P^{-1}\), where 
$J=\operatorname{diag}(J_1,\dots,J_s)$, where $J_\nu$ groups all Jordan blocks associated with $\mu_\nu$. Thus $J_\nu$ has size $d_\nu$ and satisfies
\begin{align}\label{eq:jordan_block}
    J_\nu = \mu_\nu I_{d_\nu} + N_\nu,\ N_\nu^{d_\nu} = 0, \qquad 1\le \nu\le s,
\end{align}
Here $N_\nu$ is the direct sum of the corresponding nilpotent Jordan blocks; it need not be a single shift matrix.

By the Jordan decomposition,
\begin{align*}
    e^{-tG_j} = P e^{-J t} P^{-1} = P \operatorname{diag}(e^{-J_1t}, \dots, e^{-J_st}) P^{-1},
\end{align*}
where
\begin{align}\label{eq:exp_jordan}
    e^{-J_\nu t} = e^{-\mu_\nu t} \sum_{k=0}^{d_\nu-1} \frac{(-t)^k}{k!} N_\nu^k.
\end{align}
The expansion of the exponential and the identity
\(N_\nu^{d_\nu}=0\) show that each component of
\(e^{-J_\nu t}\) is of the form \(e^{-\mu_\nu t} P_\nu(t)\),
with \(\deg P_\nu \le d_\nu-1\). Hence, any scalar component of
$B^\top e^{-tG_j} \eta$
can be written as
\begin{align*}
    f(t) = \sum_{\nu=1}^s e^{-\mu_\nu t} \mathcal{P}_\nu(t)
    =\sum_{\nu=1}^s \sum_{k=0}^{d_\nu-1} a_{\nu,k} t^k e^{-\mu_\nu t},
\end{align*}
where $a_{\nu,k}\in\mathbb C$. Omitting zero polynomials from the sum of degrees, we obtain
\begin{align*}
    \sum_{\nu=1}^s (\deg \mathcal{P}_\nu + 1) \le m.
\end{align*}

(ii) Our strategy is as follows: we construct an exponential polynomial $f_\varepsilon$ of order $N$ such that
\begin{align*}
    f_\varepsilon \to f\ \  \text{ uniformly on } I.
\end{align*}
We apply Nazarov's inequality from \cite{Nazarov} to $f_\varepsilon$ and then pass to $f$ by an approximation argument.
\begin{lemma}[Nazarov's inequality]
Let
\[
    p(t)=\sum_{j=1}^{N} c_j e^{\alpha_j t}
\]
be an exponential polynomial of order $N$, where $\alpha_j\in\mathbb{C}$ and $c_j\in\mathbb{C}$.
Let $I\subset\mathbb{R}$ be an interval, and let $E\subset I$ be a measurable subset with positive Lebesgue measure.
Then there exists a constant $C>0$, depending only on $N$, such that
\begin{equation}\label{eq:nazarov_turan}
    \sup_{I}|p|\le
    \exp\Big(|I|\max_{j}|\operatorname{Re}\alpha_j|\Big)
    \left(\frac{C|I|}{|E|}\right)^{N-1}
\sup_{E}|p|.
\end{equation}
\end{lemma} 
 
For each monomial \(t^k e^{-\mu_\nu t}\) 
appearing in \eqref{eq:gen_exp_rep}, with \(0 \le k \le d_\nu-1\), 
define the following exponential polynomial
\begin{align}\label{eq-diff}
    p_{\nu,k,\varepsilon}(t) 
    := \varepsilon^{-k} (e^{\varepsilon t} - 1)^k       e^{-\mu_\nu t}
    = \sum_{r=0}^k \binom{k}{r} (-1)^{k-r}              \varepsilon^{-k} 
        e^{(-\mu_\nu + r \varepsilon) t}. 
\end{align}
Since $(e^{\varepsilon t}-1)/\varepsilon\to t$ uniformly on the fixed interval $I$,
\begin{equation}\label{eq:uniform_conv}
\sup_{t\in I}|p_{\nu,k,\varepsilon}(t)-t^k e^{-\mu_\nu t}|\longrightarrow0
\qquad\text{as }\varepsilon\to0^+.
\end{equation}

For each \(\varepsilon>0\), define the pure exponential sum
\begin{align}
    f_\varepsilon(t):=\sum_{\nu=1}^s \sum_{k=0}^{d_\nu-1} a_{\nu,k}\, p_{\nu,k,\varepsilon}(t). \label{eq:f_epsilon_def}
\end{align}
Substituting \eqref{eq-diff} into \eqref{eq:f_epsilon_def} yields
\[
    f_\varepsilon(t)=
    \sum_{\nu=1}^s \sum_{k=0}^{d_\nu-1} \sum_{r=0}^k
    a_{\nu,k} \binom{k}{r}(-1)^{k-r}\varepsilon^{-k}
    e^{(-\mu_\nu+r\varepsilon)t},
\]
which is an exponential polynomial with at most \(m\) distinct frequencies. Indeed, for each fixed \(\nu\), the index \(r\) ranges from \(0\) to \(k\), and since \(k\le d_\nu-1\), we have \(0\le r\le d_\nu-1\). Hence, after collecting terms with identical frequencies, the total number \(N_\varepsilon\) of distinct frequencies in \(f_\varepsilon\) satisfies
\[
    N_\varepsilon \le \sum_{\nu=1}^s d_\nu = m.
\]
Moreover, the uniform convergence \(p_{\nu,k,\varepsilon}\to t^k e^{-\mu_\nu t}\) on $I$ as \(\varepsilon\to0^+\) (cf.~\eqref{eq:uniform_conv}) implies
\[
    f_\varepsilon \longrightarrow f \quad \text{uniformly on } I.
\]

For \(0 < \varepsilon \le 1\), the real parts of the frequencies are uniformly bounded by 
\(\max_\nu |\operatorname{Re} (\mu_\nu)| + m\). 
Applying \eqref{eq:nazarov_turan} to \(f_\varepsilon\) yields
\begin{align*}
    \sup_I |f_\varepsilon| 
    \le \exp\!\left(|I| (\max_\nu |\operatorname{Re} (\mu_\nu)|+m)\right) 
    \left( \frac{C(m)|I|}{|E|} \right)^{m-1} \sup_E |f_\varepsilon|.
\end{align*}
For every $j\ge1$,
\eqref{eq-spectrum-uniform-bound} and $|I|\le1$ show
that the exponential factor in the preceding inequality
is bounded uniformly with respect to $j$. Thus, taking the limit
\(\varepsilon \to 0^+\) yields the generalized estimate
\begin{align*}
\sup_I |f| 
    \le C(A,M,m,\lambda_1,r) 
    \left( \frac{|I|}{|E|} \right)^{m-1} \sup_E |f|.
\end{align*}

(iii) First, by \eqref{eq:nazarov_extended}, we prove 
\begin{align}\label{eq-367}
    \int_I |f(t)|dt \le C(A,M,m,\lambda_1,r) \left(\frac{|I|}{|E|}\right)^m \int_E |f(t)|dt,
\end{align}
where $f(t)$ is any scalar component of
$B^\top e^{-tG_j} \eta$. Set
\begin{align}
\label{eq:defME}
    M_E := \frac{1}{|E|}\int_E |f(t)|\,dt,\qquad 
    E_0 := \{t\in E : |f(t)| \le 2M_E\}.
\end{align}
Then $|E_0| \ge |E|/2$. Applying \eqref{eq:nazarov_extended} to $E_0$ yields
\begin{align*}
    \sup_I |f| \le C(A,M,m,\lambda_1,r) 
    \left( \frac{|I|}{|E_0|} \right)^{m-1} \sup_{E_0} |f|,
\end{align*}
Therefore,
\begin{align*}
    \int_I |f(t)|\,dt \le C(A,M,m,\lambda_1,r) \left( \frac{|I|}{|E_0|} \right)^{m-1}\cdot|I|\cdot \sup_{E_0} |f|,
\end{align*}
which implies \eqref{eq-367} by using the definition \eqref{eq:defME}.

Write
\begin{align*}
    B^\top e^{-tG_j} \eta
    =\bigl(f_1(t),f_2(t),\ldots,f_r(t)\bigr)\in \mathbb{R}^r.
\end{align*}
Applying \eqref{eq-367} to each \(f_i\), $i=1,\ldots,r$, and summing over $i$, we obtain
\begin{align}\label{eq-393}
    \int_I |f_1(t)|+\cdots+|f_r(t)| dt \le C(A,M,m,\lambda_1,r) \left(\frac{|I|}{|E|}\right)^m \int_E |f_1(t)|+\cdots+|f_r(t)|dt.
\end{align}
Observe that
\begin{align}\label{eq-397}
    |f_1(t)|+\cdots+|f_r(t)|\asymp \sqrt{|f_1(t)|^2+\cdots+|f_r(t)|^2}=|B^\top e^{-tG_j} \eta|_{\mathbb{R}^r}.
\end{align}
Here, $f\asymp g$ means that there exists a constant $C>0$ such that $f\le Cg$ and $g\le Cf$. Then \eqref{eq:BA} follows from \eqref{eq-393} and \eqref{eq-397}.
\end{proof}

We next establish a direct observability estimate valid for every mode.
Throughout the remainder of this section, constants may depend on the fixed
matrices, their dimensions, and $\Omega$, but not on the mode index.
Set $p=m(m-1)$ and $\omega=\|M\|$.

\begin{proposition}\label{thm:remez_kalman}
Assume \eqref{ass:H1} and \eqref{ass:H2}. The following estimates hold for every $j\ge1$, with a constant $C=C(A,M,B,m,\Omega)>0$.
For $0<\ell\le1$, every measurable $E\subset(0,\ell)$ of positive measure,
and every $\eta\in\mathbb R^m$,
\begin{equation}\label{eq-short-time}
|\eta|\le \frac{C(1+\lambda_j)^p}{|E|^m}
\int_E|B^\top e^{-tG_j}\eta|\,dt.
\end{equation}
For every $L>0$, every measurable $F\subset(0,L)$ of positive measure,
and every $\xi\in\mathbb R^m$,
\begin{equation}\label{eq:remez_kalman}
|e^{-LG_j}\xi|\le C(1+\lambda_j)^p e^{\omega L/\lambda_j}
\left(\frac{1+L}{|F|}\right)^m
\int_F|B^\top e^{-tG_j}\xi|\,dt.
\end{equation}
If $\lambda_j\alpha\ge\omega$, the exponential factor in
\eqref{eq:remez_kalman} can be omitted.
\end{proposition}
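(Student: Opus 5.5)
The strategy follows the one sketched in Section~\ref{sec:strategy}: first bound the derivatives of the observation at $t=0$ by its time integral, then recover $\eta$ from these derivatives via the rank condition, and finally pass to measurable sets with Proposition~\ref{prop-Remez}(iii). Put $y(t)=B^\top e^{-tG_j}\eta\in\mathbb R^r$, so that $y^{(k)}(0)=B^\top(-G_j)^k\eta$.

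\emph{Step 1 (recovery of the state).} Let $\mathcal K_j=[B,\ G_j^\top B,\dots,(G_j^\top)^{m-1}B]$. Since $\lambda_j G_j^\top=\lambda_j A-M$, the condition \eqref{ass:H2} is exactly $\operatorname{rank}\mathcal K_j=m$. The entries of $\mathcal K_j$ are polynomials of degree at most $m-1$ in $s=1/\lambda_j\in(0,1/\lambda_1]$. Every $m\times m$ minor is therefore a polynomial in $s$, and by \eqref{ass:H2} at least one minor is nonzero at $s=1/\lambda_j$.

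To get a quantitative lower bound, I would use the Gram determinant $g(s)=\det(\mathcal K\mathcal K^\top)(s)$. It is a polynomial in $s$, positive at every $s_j=1/\lambda_j$. The roots of $g$ are finitely many, and $s_j\to0$. If $g(0)\neq0$, then $g\ge c>0$ near $0$. If $g$ vanishes to order $q$ at $0$, then $g(s_j)\ge c\,s_j^{q}$ for all large $j$. The finitely many remaining $j$ are handled individually, each with $g(s_j)>0$.

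The order $q$ has to be bounded by $2m(m-1)$. For this I would factor $g$ as a sum of squares of minors (Cauchy--Binet). Each minor is a polynomial in $s$ of degree at most $m(m-1)$, and some minor does not vanish identically. Write it as $s^{a}h(s)$ with $h(0)\ne0$ and $a\le m(m-1)$. This gives $g(s_j)\ge c\,s_j^{2m(m-1)}$ for large $j$.

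Then $|\mathcal K_j^\top\eta|^2\ge \lambda_{\min}(\mathcal K_j\mathcal K_j^\top)|\eta|^2$, and $\lambda_{\min}\ge g/\|\mathcal K_j\|^{2(m-1)}$. Since $\|\mathcal K_j\|$ is uniformly bounded, this yields
\[
|\eta|\le C(1+\lambda_j)^{p}\sum_{k=0}^{m-1}|y^{(k)}(0)|.
\]

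\emph{Step 2 (derivatives from the integral).} This is the step I expect to be the main obstacle. By Cayley--Hamilton, each component of $y$ solves a scalar ODE $y^{(m)}=\sum_{k<m}c_k(j)y^{(k)}$ with $|c_k(j)|\le C$ uniformly, since $\|G_j\|\le K_G$. Consequently $\sup_{[0,\ell]}|y^{(m)}|\le C\sum_{k<m}|y^{(k)}(0)|$ for $\ell\le1$, using a Grönwall bound on the vector $(y,\dots,y^{(m-1)})$.

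Taylor's formula writes $y=P+R$ on $[0,\ell]$, where $P$ is the Taylor polynomial of degree $m-1$ and $|R|\le C\ell^m\sum_k|y^{(k)}(0)|$. Norm equivalence on polynomials of degree at most $m-1$, rescaled to $[0,\ell]$, gives
\[
\sum_k\ell^k|y^{(k)}(0)|\le C\ell^{-1}\int_0^\ell|P|.
\]
The remainder contributes at most $C\ell^{m}\sum_k|y^{(k)}(0)|$, which is $\le C\ell\sum_k\ell^k|y^{(k)}(0)|$ and can be absorbed once $\ell\le\ell_0$ is small. Here $\ell_0$ depends only on the matrices. This yields
\[
\sum_k|y^{(k)}(0)|\le C\ell_0^{-m}\int_0^{\ell_0}|y|\,dt.
\]
Applying the same argument on subintervals, after shifting $\eta$ by $e^{-\tau G_j}$ (uniformly invertible on bounded $\tau$), covers $\ell_0<\ell\le1$. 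In that case I would work on $(0,\ell_0)$ with the monotonicity of the integral in the interval.

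\emph{Step 3 (assembly).} Fix $\ell\le1$ and $E\subset(0,\ell)$. Combining Steps 1 and 2 gives $|\eta|\le C(1+\lambda_j)^p\int_0^\ell|y|$, with the integral taken over an interval of length at least $\min(\ell,\ell_0)$, handled as in Step 2. Proposition~\ref{prop-Remez}(iii) with $I=(0,\ell)$ then gives
\[
\int_0^\ell|y|\le C(\ell/|E|)^m\int_E|y|.
\]
Since $\ell\le1$, this yields \eqref{eq-short-time}.

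For \eqref{eq:remez_kalman}, choose a unit subinterval $I\subset(0,\max(L,1))$ with $|F\cap I|\ge|F|/(1+L)$; this is possible by pigeonhole over at most $\lceil L\rceil+1$ unit pieces. Let $\tau$ be the left endpoint of $I$ and apply \eqref{eq-short-time} to $\eta=e^{-\tau G_j}\xi$ and the shifted set. It remains to bound $|e^{-LG_j}\xi|\le\|e^{-(L-\tau)G_j}\||\eta|$.

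For this, the coercivity $\langle A X,X\rangle\ge\alpha|X|^2$ gives
\[
\tfrac{d}{dt}|e^{-tG_j}v|^2\le-2(\alpha-\omega/\lambda_j)|e^{-tG_j}v|^2 .
\]
Hence $\|e^{-tG_j}\|\le e^{\omega t/\lambda_j}$, and $\|e^{-tG_j}\|\le1$ when $\lambda_j\alpha\ge\omega$.
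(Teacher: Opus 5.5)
Your route is essentially the paper's. You recover the derivatives via Cayley--Hamilton and a Taylor expansion with the weighted jet $\sum_k\ell^k|y^{(k)}(0)|$. You recover $\eta$ from minors that are polynomial in $s=1/\lambda_j$, with loss $(1+\lambda_j)^{m(m-1)}$. Then come Proposition~\ref{prop-Remez}(iii), and pigeonhole plus coercive propagation for \eqref{eq:remez_kalman}. The paper inverts one fixed, not identically vanishing minor through its adjugate; you instead bound $\lambda_{\min}(\mathcal K_j\mathcal K_j^\top)$ from below by the Gram determinant and Cauchy--Binet. That is an equivalent variant. Your Steps 1 and 2 and the $L$-part are sound. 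The shift by $e^{-\tau G_j}$ in Step 2 is unnecessary: monotonicity of $\ell\mapsto\int_0^\ell|y|$ suffices.

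The assembly in Step 3 is wrong as written. For $\ell<\ell_0$ you cannot have $|\eta|\le C(1+\lambda_j)^p\int_0^\ell|y|\,dt$ with $C$ independent of $\ell$: for fixed $\eta\neq0$ the right-hand side tends to $0$ as $\ell\to0$. What Steps 1 and 2 actually give is
\[
|\eta|\le C(1+\lambda_j)^p\min(\ell,\ell_0)^{-m}\int_0^\ell|y|\,dt .
\]
The $\ell$-uniform constant in \eqref{eq-short-time} comes from cancelling this $\min(\ell,\ell_0)^{-m}$ against the factor $\ell^m$ from Proposition~\ref{prop-Remez}(iii). That leaves $(\ell/\min(\ell,\ell_0))^m\le\ell_0^{-m}$. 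It does not come from ``$\ell\le1$'', which only lets you discard $\ell^m$ and leaves the $\ell^{-m}$ loss untouched.

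An equally short repair uses the fact that Proposition~\ref{prop-Remez}(iii) only needs $E\subset I$ and $|I|\le1$. Run both Step 2 and the Remez bound on $I=(0,\max(\ell,\ell_0))$. Then the Step 2 constant is at most $\ell_0^{-m}$ and the Remez factor is $(|I|/|E|)^m\le|E|^{-m}$. This is presumably what your phrase ``an interval of length at least $\min(\ell,\ell_0)$'' was aiming at; it should read $\max$. Either fix takes one line, but it must be made, since the absence of any negative power of $\ell$ in \eqref{eq-short-time} is the point of the estimate.
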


\begin{proof}
\emph{Step 1: Uniform recovery of the observation derivatives.}
We claim that, for $f(t)=B^\top e^{-tG_j}\eta$ and $0<\ell\le1$,
\begin{equation}\label{eq:jet-observability}
\sum_{k=0}^{m-1}|f^{(k)}(0)|\le C\ell^{-m}\int_0^\ell|f(t)|\,dt,
\end{equation}
where $C$ depends only on $m$ and the uniform bound on $\|G_j\|$.
Let $K=\|A\|+\|M\|/\lambda_1$, so that $\|G_j\|\le K$.
Write the characteristic polynomial of $-G_j$ as
\[
\det(\zeta I_m+G_j)=\zeta^m+a_{m-1,j}\zeta^{m-1}+\cdots+a_{0,j}.
\]
Its coefficients are uniformly bounded in terms of $m$ and $K$.
Since $f^{(k)}(t)=B^\top(-G_j)^k e^{-tG_j}\eta$,
Cayley--Hamilton gives
\[
f^{(m)}+a_{m-1,j}f^{(m-1)}+\cdots+a_{0,j}f=0.
\]
Thus $U=(f,f',\ldots,f^{(m-1)})\in(\mathbb R^r)^m$ satisfies
$U'=(\mathcal C_j\otimes I_r)U$, where
\[
\mathcal C_j=
\begin{pmatrix}
0&1&0&\cdots&0\\
0&0&1&\cdots&0\\
\vdots&&\ddots&\ddots&\vdots\\
0&\cdots&0&0&1\\
-a_{0,j}&-a_{1,j}&\cdots&-a_{m-2,j}&-a_{m-1,j}
\end{pmatrix}.
\]
For $m=1$, this matrix is simply $(-a_{0,j})$.
Equip $(\mathbb R^r)^m$ with the norm
\[\|(u_0,\ldots,u_{m-1})\|_*=\sum_{k=0}^{m-1}|u_k|.\]
The operator norms of $\mathcal C_j\otimes I_r$ are bounded by a constant
$K_0=K_0(m,K)$, independently of $j$ and $r$. Hence
\[
\|U(t)\|_*\le e^{K_0t}\|U(0)\|_*,\qquad 0\le t\le1.
\]
Using the last equation of this system, we conclude that
\[
\sup_{0\le t\le1}|f^{(m)}(t)|
\le C_0\sum_{k=0}^{m-1}|f^{(k)}(0)|,
\qquad C_0=C_0(m,K).
\]
We decompose $f=P+R$, where $P$ is its Taylor polynomial of degree
at most $m-1$ at zero and $R$ is the corresponding integral remainder:
\[
P(t)=\sum_{k=0}^{m-1}\frac{t^k}{k!}f^{(k)}(0),\qquad
R(t)=\frac1{(m-1)!}\int_0^t(t-s)^{m-1}f^{(m)}(s)\,ds.
\]
For $0<\ell\le1$, we also introduce the weighted sum
\[
J_\ell:=\sum_{k=0}^{m-1}\ell^k|f^{(k)}(0)|.
\]
We first justify the polynomial estimate
\[
\int_0^\ell|P(t)|\,dt\ge c_0\ell J_\ell,
\qquad c_0=c_0(m)>0.
\]
On the scalar polynomial space of degree at most $m-1$, the map
$q\mapsto\int_0^1|q(s)|\,ds$ is a norm. Each derivative evaluation at zero
is a continuous linear functional on this finite-dimensional space, so
\[
|q^{(k)}(0)|\le C_m\int_0^1|q(s)|\,ds,
\qquad 0\le k\le m-1.
\]
Apply this to the vector polynomial $Q(s)=P(\ell s)$ as follows.
For each $k$ with $f^{(k)}(0)\ne0$, choose the unit vector
$v_k=f^{(k)}(0)/|f^{(k)}(0)|$ and use the scalar polynomial
$q_k(s)=v_k\cdot Q(s)$. Since
$q_k^{(k)}(0)=\ell^k|f^{(k)}(0)|$ and $|q_k(s)|\le|Q(s)|$, we get
\[
\ell^k|f^{(k)}(0)|\le C_m\int_0^1|Q(s)|\,ds.
\]
The same inequality is immediate for a zero derivative. Summing over $k$
and changing variables gives
$J_\ell\le mC_m\ell^{-1}\int_0^\ell|P(t)|\,dt$, as claimed.

We next estimate the remainder. Using its integral representation and
the bound on $f^{(m)}$ obtained above, we have, for $0\le t\le\ell$,
\begin{align*}
|R(t)|
&\le\frac{C_0}{(m-1)!}
\left(\sum_{k=0}^{m-1}|f^{(k)}(0)|\right)
\int_0^t(t-s)^{m-1}\,ds\\
&=\frac{C_0t^m}{m!}\sum_{k=0}^{m-1}|f^{(k)}(0)|.
\end{align*}
Integrating in $t$ therefore gives
\[
\int_0^\ell|R(t)|\,dt
\le\frac{C_0\ell^{m+1}}{m!(m+1)}
\sum_{k=0}^{m-1}|f^{(k)}(0)|.
\]
To express this bound in terms of $J_\ell$, note that
$0<\ell\le1$ and $k\le m-1$ imply $\ell^k\ge\ell^{m-1}$. Thus
\[
J_\ell=\sum_{k=0}^{m-1}\ell^k|f^{(k)}(0)|
\ge\ell^{m-1}\sum_{k=0}^{m-1}|f^{(k)}(0)|.
\]
It follows, with $C_R=C_0/(m!(m+1))$, that
\[
\int_0^\ell|R(t)|\,dt
\le C_R\ell^{m+1}\ell^{-(m-1)}J_\ell
=C_R\ell^2J_\ell.
\]
Choose
\[
\ell_0=\min\left\{1,\frac{c_0}{2(C_R+1)}\right\}.
\]
For $0<\ell\le\ell_0$, the triangle inequality gives
\[
\int_0^\ell|f(t)|\,dt
\ge\int_0^\ell|P(t)|\,dt-\int_0^\ell|R(t)|\,dt
\ge\frac{c_0}{2}\ell J_\ell.
\]
Combining this with
$\sum_{k=0}^{m-1}|f^{(k)}(0)|\le\ell^{-(m-1)}J_\ell$ proves
\eqref{eq:jet-observability} for $\ell\le\ell_0$.
For $\ell_0<\ell\le1$, apply this estimate on $(0,\ell_0)$:
\[
\sum_{k=0}^{m-1}|f^{(k)}(0)|
\le\frac{2}{c_0\ell_0^m}\int_0^{\ell_0}|f(t)|\,dt
\le\frac{2}{c_0\ell_0^m}\ell^{-m}\int_0^\ell|f(t)|\,dt.
\]
This proves \eqref{eq:jet-observability} on the whole range $0<\ell\le1$.
In particular, $\ell_0$ depends only on $m$ and $K$, and not on
$B$ or the conditioning of the Kalman matrix.

\emph{Step 2: Quantitative recovery of the state.}
Consider the polynomial matrix
\[
\mathcal O(s)=\begin{pmatrix}
B^\top\\ B^\top(A^\top-sM^\top)\\ \vdots\\
B^\top(A^\top-sM^\top)^{m-1}
\end{pmatrix}.
\]
By the Kalman rank condition \eqref{ass:H2}, $\mathcal O(1/\lambda_j)$ has rank $m$ for every $j$.

Fix once and for all an $m\times m$ minor $S(s)$ of $\mathcal O(s)$
whose determinant $d(s)$ is not identically zero. Such a minor exists
because $\mathcal O(1/\lambda_j)$ has rank $m$. This choice is fixed
throughout the argument, independently of the mode index $j$; hence the
coefficients of $S$ and $d$ depend only on the fixed matrices $A,M,B$.
Each row has degree at most $m-1$, and the determinant is a sum of
products of $m$ entries; hence $\deg d\le m(m-1)=p$.
Factor out its zero at the origin:
\[
d(s)=s^q h(s),\qquad 0\le q\le p,\qquad h(0)\ne0.
\]
The polynomial $h$ is also fixed, rather than chosen separately for each
$j$. By continuity, we may choose $s_0\in(0,1]$, depending only on $h$,
such that $|h(s)|\ge|h(0)|/2$ for $0\le s\le s_0$.
Set
\[
C_0=\frac{2}{|h(0)|}\max_{0\le s\le s_0}
\|\operatorname{adj}S(s)\|<\infty.
\]
The cofactor formula gives
\[
\|S(s)^{-1}\|
=\left\|\frac{\operatorname{adj}S(s)}{s^q h(s)}\right\|
\le C_0s^{-q},\qquad 0<s\le s_0.
\]
Thus $s_0$ and $C_0$ depend on the fixed matrices $A,M,B$ through the
chosen minor, but neither depends on $j$ nor on the evaluation point
$s=1/\lambda_j$.
It follows that, for $\lambda_j\ge s_0^{-1}$,
\begin{align*}
|\eta| = |S(1/\lambda_j)^{-1}S(1/\lambda_j) \eta|&\le C\lambda_j^q|\mathcal O(1/\lambda_j)\eta|\\
&\le C(1+\lambda_j)^p\sum_{k=0}^{m-1}|B^\top G_j^k\eta|.
\end{align*}
For the finitely many indices with $\lambda_j<s_0^{-1}$, define
\[
\kappa_j=\min_{|\eta|=1}|\mathcal O(1/\lambda_j)\eta|>0.
\]
Positivity follows from injectivity and compactness of the unit sphere.
If this finite set is nonempty, its minimum $\kappa_*>0$ satisfies
$|\eta|\le\kappa_*^{-1}|\mathcal O(1/\lambda_j)\eta|$ for each of these modes.
Enlarging $C$ thus gives
\begin{equation}\label{eq:quantitative-kalman}
|\eta|\le C(1+\lambda_j)^p\sum_{k=0}^{m-1}|B^\top G_j^k\eta|,
\qquad j\ge1.
\end{equation}
The dependence on $\Omega$ includes the lower bound $\lambda_1$ and
these finitely many singular-value bounds. This is the only step
that distinguishes small and large modes.

\emph{Step 3: Observation on measurable sets.}
Since $f^{(k)}(0)=(-1)^kB^\top G_j^k\eta$,
\eqref{eq:quantitative-kalman} and \eqref{eq:jet-observability} imply
\[
|\eta|\le C(1+\lambda_j)^p\ell^{-m}\int_0^\ell|f(t)|\,dt.
\]
Apply \eqref{eq:BA} on $I=(0,\ell)$ to obtain
\[
\int_0^\ell|f(t)|\,dt
\le C\left(\frac{\ell}{|E|}\right)^m\int_E|f(t)|\,dt.
\]
The factors $\ell^{-m}$ and $\ell^m$ cancel, yielding
\eqref{eq-short-time}. Both estimates are uniform over all modes.

\emph{Step 4: Arbitrary time intervals.}
Partition $(0,L)$ into $N=\lceil L\rceil$ intervals of equal length at most one.
One of them, $I=(a,b)$, satisfies $|F\cap I|\ge|F|/N$.
Applying \eqref{eq-short-time} to $e^{-aG_j}\xi$ and the translated set
$(F\cap I)-a$ gives
\[
|e^{-aG_j}\xi|\le C(1+\lambda_j)^p
\left(\frac{N}{|F|}\right)^m
\int_{F\cap I}|B^\top e^{-tG_j}\xi|\,dt.
\]
To propagate this estimate to time $L$, let $v(t)=e^{-tG_j}v_0$.
The coercivity assumption \eqref{ass:H1}, which also holds for $A^\top$,
gives
\[
\frac12\frac{d}{dt}|v(t)|^2
=-\langle A^\top v(t),v(t)\rangle
+\lambda_j^{-1}\langle M^\top v(t),v(t)\rangle
\le(\omega/\lambda_j-\alpha)|v(t)|^2.
\]
Gronwall's inequality implies
\begin{equation}\label{eq:mode-semigroup}
\|e^{-tG_j}\|\le e^{(\omega/\lambda_j-\alpha)t},\qquad t\ge0.
\end{equation}
In particular,
\[
|e^{-LG_j}\xi|
\le e^{(\omega/\lambda_j-\alpha)(L-a)}|e^{-aG_j}\xi|
\le e^{\omega L/\lambda_j}|e^{-aG_j}\xi|.
\]
Combining this bound with the preceding estimate, $N\le1+L$, and
$F\cap I\subset F$ proves \eqref{eq:remez_kalman}.
If $\lambda_j\alpha\ge\omega$, the first exponential factor is at most one,
so the propagation is contractive and no exponential loss is needed.
\end{proof}

We now use Proposition~\ref{thm:remez_kalman} to estimate the low-frequency part.
\begin{theorem}\label{thm-low-frequency}
Assume \eqref{ass:H1} and \eqref{ass:H2}. There exists
$C=C(A,M,B,m,\Omega)>0$ such that, for $0<S_1<S_2\le T$,
every measurable $E\subset(0,T)$ with $\delta=|E\cap(S_1,S_2)|>0$, every $\Lambda>0$, and every initial datum $z\in L^2(\Omega;\mathbb R^m)$,
\begin{multline}\label{eq-final-low}
\|P_\Lambda z(S_2)\|_{L^2(\Omega;\mathbb R^m)}
\\
\le\frac{C e^{\omega(S_2-S_1)}(1+\Lambda)^{p+1}(1+S_2-S_1)^m}{\delta^m}
\int_{S_1}^{S_2}\chi_E(t)
\|B^\top P_\Lambda z(t)\|_{L^2(\Omega;\mathbb R^r)}\,dt.
\end{multline}
\end{theorem}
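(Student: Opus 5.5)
We reduce the estimate to the mode-by-mode bound \eqref{eq:remez_kalman} of Proposition~\ref{thm:remez_kalman}, after undoing the time rescaling. Expanding as in \eqref{eq:expansion}, $P_\Lambda z(t)=\sum_{\lambda_j\le\Lambda}e^{-t\lambda_jG_j}z_je_j$. By orthonormality of $\{e_j\}$ and because $P_\Lambda$ commutes with constant matrices,
\[
\|P_\Lambda z(S_2)\|_{L^2}^2=\sum_{\lambda_j\le\Lambda}|e^{-S_2\lambda_jG_j}z_j|^2,
\qquad
\|B^\top P_\Lambda z(t)\|_{L^2}^2=\sum_{\lambda_j\le\Lambda}|B^\top e^{-t\lambda_jG_j}z_j|^2 .
\]
We therefore prove a bound for each mode and then sum, using Minkowski's integral inequality so that no factor depending on the number of retained modes appears.

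\emph{Single mode.} Fix $j$ with $\lambda_j\le\Lambda$ and set $\xi=e^{-S_1\lambda_jG_j}z_j$. Substituting $s=\lambda_j(t-S_1)$ turns the interval $(S_1,S_2)$ into $(0,L)$ with $L=\lambda_j(S_2-S_1)$. It turns $E\cap(S_1,S_2)$ into a set $F\subset(0,L)$ with $|F|=\lambda_j\delta$. Applying \eqref{eq:remez_kalman} and changing variables back ($ds=\lambda_j\,dt$) gives
\[
|e^{-S_2\lambda_jG_j}z_j|\le C(1+\lambda_j)^pe^{\omega(S_2-S_1)}\Bigl(\frac{1+\lambda_j(S_2-S_1)}{\lambda_j\delta}\Bigr)^m\lambda_j\int_{S_1}^{S_2}\chi_E(t)|B^\top e^{-t\lambda_jG_j}z_j|\,dt .
\]
Since $\lambda_j\ge\lambda_1$, we have $\frac{1+\lambda_j(S_2-S_1)}{\lambda_j}\le\lambda_1^{-1}+(S_2-S_1)\le C(1+S_2-S_1)$. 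The extra factor $\lambda_j$ is at most $1+\Lambda$, and $(1+\lambda_j)^p\le(1+\Lambda)^p$. So each mode satisfies the claimed inequality with the prefactor $Ce^{\omega(S_2-S_1)}(1+\Lambda)^{p+1}(1+S_2-S_1)^m\delta^{-m}$, which does not depend on $j$.

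\emph{Summation.} Write $a_j(t)=|B^\top e^{-t\lambda_jG_j}z_j|$. Taking the $\ell^2$ norm in $j$ of the mode bounds, Minkowski's inequality gives
\[
\Bigl(\sum_j\Bigl(\int\chi_Ea_j\,dt\Bigr)^2\Bigr)^{1/2}\le\int\chi_E\Bigl(\sum_ja_j^2\Bigr)^{1/2}dt=\int_{S_1}^{S_2}\chi_E\|B^\top P_\Lambda z(t)\|_{L^2}\,dt,
\]
which yields \eqref{eq-final-low}.

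The step that needs the most care is the bookkeeping in the time rescaling. The factor $\lambda_j$ coming from $ds=\lambda_j\,dt$ must be absorbed into the single extra power in $(1+\Lambda)^{p+1}$. The remaining powers $\lambda_j^{-m}$ must be dominated using the lower bound $\lambda_j\ge\lambda_1$. One also needs the exponential factor $e^{\omega L/\lambda_j}=e^{\omega(S_2-S_1)}$ to come out independent of $j$, which it does.
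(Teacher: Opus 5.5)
Your proposal is correct and follows the paper's proof essentially step for step. You rescale each mode by $s=\lambda_j(t-S_1)$ and apply \eqref{eq:remez_kalman} to the state $e^{S_1D_j}z_j$ at time $S_1$, absorbing the Jacobian $\lambda_j$ into the extra power of $(1+\Lambda)$ and bounding $\lambda_j^{-1}+(S_2-S_1)$ via $\lambda_j\ge\lambda_1$; you then combine the modes with Minkowski's integral inequality and Parseval's identity.
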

\begin{proof}
Replace $E$ by $E\cap(S_1,S_2)$ and set $\Delta=S_2-S_1$.
For each $j$, put $D_j=-\lambda_jA^\top+M^\top$ and apply
\eqref{eq:remez_kalman} with
\[
\eta=e^{S_1D_j}\xi,\qquad L=\lambda_j\Delta,
\qquad F=\{\lambda_j(t-S_1):t\in E\}.
\]
Since $|F|=\lambda_j|E|$ and
$\lambda_j^{-1}+\Delta\le\max\{1,\lambda_1^{-1}\}(1+\Delta)$, we obtain
\begin{equation}\label{eq:single_mode}
|e^{S_2D_j}\xi|
\le\frac{C e^{\omega\Delta}(1+\lambda_j)^p\lambda_j(1+\Delta)^m}{|E|^m}
\int_E|B^\top e^{tD_j}\xi|\,dt.
\end{equation}
Define
\begin{equation}\label{eq-aj-bj}
a_j=e^{S_2D_j}z_j,\qquad b_j(t)=B^\top e^{tD_j}z_j.
\end{equation}
For $\lambda_j\le\Lambda$, \eqref{eq:single_mode} implies
$|a_j|\le H_\Lambda\int_E|b_j(t)|\,dt$, where
\[
H_\Lambda=\frac{C e^{\omega\Delta}(1+\Lambda)^{p+1}(1+\Delta)^m}{|E|^m}.
\]
Minkowski's integral inequality then gives
\[
\left(\sum_{\lambda_j\le\Lambda}|a_j|^2\right)^{1/2}
\le H_\Lambda\int_E
\left(\sum_{\lambda_j\le\Lambda}|b_j(t)|^2\right)^{1/2}\,dt.
\]
Finally, Parseval's identity yields
\begin{equation}\label{eq-norm2}
\|B^\top P_\Lambda z(t)\|_{L^2(\Omega;\mathbb R^r)}^2
=\sum_{\lambda_j\le\Lambda}|b_j(t)|^2,
\end{equation}
and the corresponding identity for $P_\Lambda z(S_2)$ proves \eqref{eq-final-low}.
\end{proof}

\subsection{Proof of Theorem \ref{thm-interpolation}}\label{sec3.2}

\begin{proof}
We use the low-frequency estimate \eqref{eq-final-low} in three steps.
Throughout this proof, $\|\cdot\|_q$ denotes the $L^q(\Omega)$ norm
with values in the appropriate Euclidean space. Set
\[
\delta=|E\cap(S_1,S_2)|,\qquad
\mathcal I(v)=\int_{S_1}^{S_2}\chi_E(t)
\|\chi_{\mathcal D_t}B^\top v(t)\|_1\,dt.
\]

\emph{Step 1: Localization.}
We first prove that
\begin{equation}\label{eq-step-2}
\|P_\Lambda z(S_2)\|_2\le K_\Lambda\mathcal I(P_\Lambda z),
\end{equation}
where
\begin{equation}
K_\Lambda=\frac{C_5(1+\Lambda)^{p+1}(1+S_2-S_1)^m
 e^{\omega S_2+C_5\sqrt\Lambda}}{\delta^m},
\end{equation}
Here $C_5>0$ depends only on the fixed parameters.
For the coefficients $b_j(t)$ defined in \eqref{eq-aj-bj}, the spectral
inequality \eqref{eq-assumption-Lebeau} gives, for almost every $t\in E$,
\begin{equation}\label{eq-906}
\sum_{\lambda_j\le\Lambda}|b_j(t)|^2
\le D e^{D\sqrt\Lambda}
\|\chi_{\mathcal D_t}B^\top P_\Lambda z(t)\|_1^2.
\end{equation}
Here we used $\sum_{\lambda_j\le\Lambda}b_j(t)e_j=B^\top P_\Lambda z(t)$.
Insert \eqref{eq-906} into \eqref{eq-final-low} and enlarge $C_5$ to
obtain \eqref{eq-step-2}.

\emph{Step 2: Frequency splitting.}
For $\theta\in(0,1)$, we seek an estimate of the form
\begin{equation}\label{eq-interpolation-step2}
\|z(S_2)\|_2\le\frac{C_7}{\delta^{m-1}}
\left(\varepsilon(\Lambda)^{-\gamma}\frac{\mathcal I(z)}\delta
+\varepsilon(\Lambda)\|z\|_2\right),
\end{equation}
where $\gamma=\theta/(1-\theta)$ and
\begin{equation}\label{eq-varepsilon-Lambda}
\varepsilon(\Lambda)=e^{-c(\Lambda-\Lambda_1)S_1/2},\qquad\Lambda>\Lambda_1.
\end{equation}
By \eqref{ass:H1} and the choice of $\Lambda_1$, the high-frequency part satisfies
\begin{equation}\label{eq-high-energy}
\|(I-P_\Lambda)z(t)\|_2\le C_{\rm h}e^{-c\Lambda t}\|z\|_2,
\qquad t\ge0,\quad\Lambda>\Lambda_1,
\end{equation}
with $C_{\rm h},c>0$ depending only on $A$ and $M$.
The triangle inequality, \eqref{eq-step-2}, and \eqref{eq-high-energy} yield
\begin{align*}
\|z(S_2)\|_2
&\le K_\Lambda\mathcal I(P_\Lambda z)
+C_{\rm h}e^{-c\Lambda S_2}\|z\|_2\\
&\le K_\Lambda\mathcal I(z)
+C_{\rm h}\bigl(\|B\||\Omega|^{1/2}K_\Lambda\delta e^{-c\Lambda S_1}
+e^{-c\Lambda S_2}\bigr)\|z\|_2.
\end{align*}
Absorbing the polynomial factor $(1+\Lambda)^{p+1}$ into
$Ce^{C\sqrt\Lambda}$, and using $\delta\le S_2$ and
$1+S_2-S_1\le1+S_2$, gives
\begin{equation}\label{eq-lebeau}
\|z(S_2)\|_2\le\frac{C_6e^{\omega S_2}(1+S_2)^m}{\delta^{m-1}}\left(e^{C_6\sqrt\Lambda}\frac{\mathcal I(z)}\delta
+e^{C_6\sqrt\Lambda-c\Lambda S_1}\|z\|_2\right),
\end{equation}
where $C_6\ge1$ depends only on the fixed parameters.
Young's inequality implies
\begin{align*}
C_6\sqrt\Lambda-\tfrac12c\Lambda S_1
&\le\frac{C_6^2}{c\theta S_1},\\*
C_6\sqrt\Lambda
&\le\tfrac12c\gamma\Lambda S_1+\frac{C_6^2}{c\theta S_1}.
\end{align*}
Moreover,
$e^{-c\Lambda S_1/2}\le\varepsilon(\Lambda)$ and
$e^{c\gamma\Lambda S_1/2}
=\varepsilon(\Lambda)^{-\gamma}e^{c\gamma\Lambda_1S_1/2}$.
Since $\gamma S_1\le S_2/(1-\theta)$, inserting these bounds into
\eqref{eq-lebeau} proves \eqref{eq-interpolation-step2} with
\begin{equation}\label{eq-C4-def}
C_7=C_6(1+S_2)^m
\exp\left(\omega S_2+\frac{C_6^2}{c\theta S_1}
+\frac{c\Lambda_1S_2}{2(1-\theta)}\right).
\end{equation}

\emph{Step 3: Optimization.}
Write $X=\|z(S_2)\|_2$, $Y=\mathcal I(z)/\delta$,
$Z=\|z\|_2$, and $\Pi=C_7/\delta^{m-1}$.
As $\Lambda$ ranges over $(\Lambda_1,\infty)$,
$\varepsilon(\Lambda)$ ranges over $(0,1)$, so
\eqref{eq-interpolation-step2} reads
\[
X\le\Pi(\varepsilon^{-\gamma}Y+\varepsilon Z),\qquad0<\varepsilon<1.
\]
Since $\delta\le S_2$ and $C_6\ge1$, we have $\Pi\ge e^{\omega S_2}$.
The energy estimate $X\le e^{\omega S_2}Z$ therefore extends this bound
to $\varepsilon\ge1$. For $Y,Z>0$, choose
$\varepsilon=(Y/Z)^{1-\theta}$ to get
\[
\|z(S_2)\|_2\le\frac{2C_7}{\delta^{m-1}}
\left(\frac{\mathcal I(z)}\delta\right)^{1-\theta}\|z\|_2^\theta.
\]
The same conclusion holds if $Y$ or $Z$ vanishes, by a limiting argument
or uniqueness. Finally, \eqref{eq-C4-def} gives
$2C_7\le C_8\exp\bigl(C_8[S_2/(1-\theta)+1/(\theta S_1)]\bigr)$
for a constant $C_8$ depending only on the fixed parameters.
This proves \eqref{eq-interpolation-inequality}.
\end{proof}

\section{Proof of Theorem \ref{theorem:obs}}\label{sec3}
We follow the telescoping argument of \cite{PhungWang,AEWZ,Fu},
using Theorem~\ref{thm-interpolation}. In this proof, $\|\cdot\|_q$
denotes the $L^q(\Omega)$ norm with values in the appropriate Euclidean space.

\emph{Step 1: Localization.}
By \cite[Lemma 2.3]{Fu}, there are $(t_0,x_0)\in\mathcal D$ and $R>0$ such that
\begin{equation}\label{eq-1047}
B_{4R}^{d+1}(t_0,x_0)\subset(0,T)\times\Omega,\qquad
|B_R^{d+1}(t_0,x_0)\cap\mathcal D|\ge\tfrac12|B_R^{d+1}|>0.
\end{equation}
Set $\mathcal D_R=B_R^{d+1}(t_0,x_0)\cap\mathcal D$ and define
\[
(\mathcal D_R)_t=\{x:(t,x)\in\mathcal D_R\},\qquad
E_R=\left\{t\in(0,T):|(\mathcal D_R)_t|\ge\frac{|\mathcal D_R|}{2T}\right\}.
\]
Since $B_{4R}^d(x_0)\subset\Omega$ and
$\mathcal D_R\subset (0,T)\times B_R^d(x_0)$, this is an observation pair
of Section~\ref{sec2}. Lemmas~\ref{lemma-Fu-1} and \ref{lemma-Fu-2}
give $|E_R|>0$ and \eqref{eq-assumption-Lebeau} with the fixed constant
$D=D(R,|\mathcal D_R|/T)$. We will also use
\begin{equation}\label{eq-subset}
\chi_{E_R}(t)\chi_{(\mathcal D_R)_t}(x)\le\chi_{\mathcal D_R}(t,x).
\end{equation}

\emph{Step 2: Interpolation.}
We recall the following density-point lemma from \cite{AEWZ,Lion,PhungWang}.
\begin{lemma}\label{lemma-density-point}
Let $E\subset(0,T)$ have positive measure and let $\ell$ be a density point of $E$.
For every $\mu>1$, there exists $\ell_1\in(\ell,T)$ such that
\[
\ell_{j+1}=\ell+\mu^{-j}(\ell_1-\ell),\qquad
\ell_j-\ell_{j+1}\le3|E\cap(\ell_{j+1},\ell_j)|,\qquad j\ge1.
\]
\end{lemma}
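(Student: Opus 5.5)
The plan is to use only the definition of a density point. Set $h:=\ell_1-\ell$ and note that the recursion gives $\ell_j-\ell=\mu^{-(j-1)}h$ for all $j\ge1$ (the case $j=1$ is consistent with the formula), so
\[
\ell_j-\ell_{j+1}=\Bigl(1-\tfrac1\mu\Bigr)\mu^{-(j-1)}h=\Bigl(1-\tfrac1\mu\Bigr)(\ell_j-\ell).
\]
Thus every gap is a fixed proportion $1-1/\mu$ of the distance from $\ell_j$ to $\ell$. It then suffices to know that $E$ fills all sufficiently small right-neighbourhoods $(\ell,\ell+s)$ up to a small fraction of their length.

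First, I will derive a one-sided density statement. Since $\ell$ is a (Lebesgue) density point of $E$, we have $|(\ell-s,\ell+s)\setminus E|=o(s)$ as $s\to0^+$. Hence $|(\ell,\ell+s)\setminus E|\le|(\ell-s,\ell+s)\setminus E|=o(s)$ as well. Fix
\[
\varepsilon:=\tfrac23\Bigl(1-\tfrac1\mu\Bigr)>0
\]
and choose $s_0>0$ such that $|(\ell,\ell+s)\setminus E|\le\varepsilon s$ for all $0<s\le s_0$. Since $\ell<T$, we may set $\ell_1:=\ell+\min\{s_0,(T-\ell)/2\}\in(\ell,T)$.

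Next, I will estimate each gap. Since $(\ell_{j+1},\ell_j)\subset(\ell,\ell_j)$ and $\ell_j-\ell\le h\le s_0$, we get
\[
|(\ell_{j+1},\ell_j)\setminus E|\le|(\ell,\ell_j)\setminus E|\le\varepsilon(\ell_j-\ell).
\]
It follows that
\[
|E\cap(\ell_{j+1},\ell_j)|\ge(\ell_j-\ell)\Bigl(1-\tfrac1\mu-\varepsilon\Bigr)=\tfrac13\Bigl(1-\tfrac1\mu\Bigr)(\ell_j-\ell)=\tfrac13(\ell_j-\ell_{j+1}),
\]
which is the claimed inequality, valid for every $j\ge1$.

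There is no real obstacle in this argument. The only point needing care is to choose $\varepsilon$ proportional to $1-1/\mu$, because the complement of $E$ is controlled relative to $\ell_j-\ell$, whereas the gap is smaller by the factor $1-1/\mu$. This is also where the dependence of $\ell_1$ on $\mu$ enters.
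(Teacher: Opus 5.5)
Your argument is correct and is essentially the standard proof from the works the paper cites for this lemma (Phung--Wang; Apraiz--Escauriaza--Wang--Zhang). The paper itself gives no proof. Those proofs likewise combine the one-sided density estimate $|E\cap(\ell,\ell+s)|\ge(1-\varepsilon)s$ for small $s$ with the geometric identity $\ell_j-\ell_{j+1}=(1-\mu^{-1})(\ell_j-\ell)$, and the same choice $\varepsilon=\frac23(1-\mu^{-1})$ yields the factor $3$.
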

Fix $\beta>0$, choose $\mu=\sqrt{(\beta+2)/(\beta+1)}$, and apply the lemma
to a density point $\ell$ of $E_R$. Write $h=\ell_1-\ell$ and set
\[
\delta_j=|E_R\cap(\ell_{j+1},\ell_j)|,\qquad
\mathcal J_j=\int_{\ell_{j+1}}^{\ell_j}\chi_{E_R}(t)
\|\chi_{(\mathcal D_R)_t}B^\top z(t)\|_1\,dt.
\]
Restrict $E_R$ to $(\ell_{j+1},\ell_j)$ and translate time by $\ell_{j+2}$.
The resulting observation pair has the same spectral constant $D$,
independently of $j$. Apply Theorem~\ref{thm-interpolation} to
$z(\ell_{j+2}+\cdot)$ with $S_1=\ell_{j+1}-\ell_{j+2}$ and
$S_2=\ell_j-\ell_{j+2}$. Changing back to the original time variable yields
\begin{multline*}
\|z(\ell_j)\|_2\le\frac{C_8}{\delta_j^{m-1}}
\exp\left[C_8\left(\frac{\ell_j-\ell_{j+2}}{1-\theta}
+\frac1{\theta(\ell_{j+1}-\ell_{j+2})}\right)\right]\\
\times\left(\frac{\mathcal J_j}{\delta_j}\right)^{1-\theta}
\|z(\ell_{j+2})\|_2^\theta.
\end{multline*}
Take $\theta=\beta/(\beta+1)$. The identities for $\ell_j$ and the density estimate give
\[
(\ell_{j+1}-\ell_{j+2})^{-1}
=\frac{\mu^{j+1}}{h(\mu-1)},\qquad
\delta_j^{-1}\le\frac{3\mu^j}{h(\mu-1)}.
\]
Young's inequality therefore gives, after enlarging a constant
$C_9=C_9(C_8,\beta,h,\mu)>0$,
\begin{equation}\label{eq:iteration-young}
\|z(\ell_j)\|_2\le K\varepsilon^{-\beta}e^{C_9\mu^{j+2}}\mathcal J_j
+\varepsilon\|z(\ell_{j+2})\|_2,\qquad\varepsilon>0,
\end{equation}
where the constant
\[
K=\frac{C_\beta C_8^{\beta+1}e^{C_8(\beta+1)^2T}}
{[h(\mu-1)]^{m+(m-1)\beta}}
\]
is independent of $j$ and $z$. Polynomial factors in $\mu^j$ have been
absorbed into $e^{C_9\mu^{j+2}}$.

\emph{Step 3: Telescoping.}
Choose $\varepsilon_j=e^{-C_9\mu^{j+2}}$ in \eqref{eq:iteration-young}
and multiply by $e^{-C_9(\beta+2)\mu^j}$.
Since $(\beta+1)\mu^{j+2}=(\beta+2)\mu^j$, we obtain
\begin{equation*}
e^{-C_9(\beta+2)\mu^j}\|z(\ell_j)\|_2
-e^{-C_9(\beta+2)\mu^{j+2}}\|z(\ell_{j+2})\|_2
\le K\mathcal J_j.
\end{equation*}
Sum over even $j\ge2$. The remainder tends to zero, because
$z\in C([0,T];L^2)$ and the exponential weights tend to zero.
The integration intervals are disjoint, hence
\begin{align*}
e^{-C_9(\beta+2)\mu^2}\|z(\ell_2)\|_2
&\le K\sum_{k\ge1}\mathcal J_{2k}\\
&\le K\int_0^T\chi_{E_R}(t)
\|\chi_{(\mathcal D_R)_t}B^\top z(t)\|_1\,dt.
\end{align*}
Finally, the energy estimate gives
$\|z(T)\|_2\le e^{\omega(T-\ell_2)}\|z(\ell_2)\|_2$.
Combining this with \eqref{eq-subset} and $\mathcal D_R\subset\mathcal D$ yields
\begin{equation*}
\|z(T)\|_2\le
K e^{\omega T+C_9(\beta+2)\mu^2}
\int_{\mathcal D}|B^\top z(t,x)|\,dx\,dt,
\end{equation*}
which is \eqref{eq:observability}.
\qed

\section*{Acknowledgements}

X. Chen is grateful to Professor Gengsheng Wang and Professor Shanlin Huang for helpful discussions, particularly regarding the proof and construction of Proposition \ref{prop-Remez}.

\section*{Disclosure of AI assistance}

The authors used AI tools for proofreading and reformulating parts of
the manuscript, preparing the literature discussion and locating references,
and for assistance in simplifying their initial proof of
Proposition~\ref{thm:remez_kalman}. That initial proof was inspired by the
approach of Lissy and Zuazua \cite{LissyZuazua2019} and treated low and high
frequencies separately. The AI-assisted simplification led to the unified
observability argument presented here. The authors take full responsibility
for the mathematical content and the final text of the article.

\end{document}